\newcommand{\n}{\cap}
\newcommand{\un}{\cup}
\newcommand{\R}{\mathbb R}
\newcommand{\C}{\mathbb C}
\newcommand{\Z}{\mathbb Z}
\newcommand{\N}{\mathbb N}
\newcommand{\F}{\mathbb F}
\newcommand{\CC}{\mathcal C}
\renewcommand{\epsilon}{\varepsilon}
\renewcommand{\phi}{\varphi}
\newcommand{\recip}[1]{\frac{1}{#1}}
\renewcommand{\Re}{\operatorname{Re}}
\renewcommand{\Im}{\operatorname{Im}}
\DeclareMathOperator{\im}{im}
\DeclareMathOperator{\dist}{dist}
\renewcommand{\:}{\colon}
\newcommand\bottomnote[1]{%
  \begingroup
  \renewcommand\thefootnote{}\footnote{#1}%
  \addtocounter{footnote}{-1}%
  \endgroup
}
\newcommand{\satref}[1]{\emph{\ref{#1}}}
\newcommand{\MSC}[1]{\href{http://www.ams.org/msc/msc2010.html?t=#1&btn=Current}{#1}}
\theoremstyle{plain}
\newtheorem{thm}{Theorem}
\newtheorem{lem}[thm]{Lemma}
\newtheorem{prop}[thm]{Proposition}
\theoremstyle{definition}
\newtheorem{defn}[thm]{Definition}
\theoremstyle{remark}
\newtheorem{rk}[thm]{Remark}
\theoremstyle{plain}
\newcounter{examplesubsection}[thm]
\renewcommand{\theexamplesubsection}{\thethm.\arabic{examplesubsection}}
\newcounter{extsection}
\renewcommand{\theextsection}{\arabic{extsection}}
\numberwithin{equation}{section}
\numberwithin{thm}{section}
\title{PATHOLOGICAL PHENOMENA IN DENJOY-CARLEMAN CLASSES}
\author{Ethan Y. Jaffe}
\begin{document}

\begin{abstract}
Let $\CC^M$ denote a Denjoy-Carleman class of $\CC^\infty$ functions (for a given logarithmically-convex sequence $M = (M_n)$). We construct: (1) a function in $\CC^M((-1,1))$ which is \emph{nowhere} in any smaller class; (2) a function on $\R$ which is formally $\CC^M$ \emph{at every point}, but not in $\CC^M(\R)$;  (3) (under the assumption of quasianalyticity) a smooth function on $\R^p$ ($p \geq 2$) which is $\CC^M$ on every $\CC^M$ curve, but not in $\CC^M(\R^p)$.
\end{abstract}

\maketitle
\tableofcontents

\section{Introduction}
\bottomnote{2010 \textit{Mathematics Subject Classification.} Primary: \MSC{26E10}, \MSC{26B35}; Secondary: \MSC{26E05}, \MSC{30D60}, \MSC{46E25}}
\bottomnote{\textit{Key words and phrases.} Denjoy-Carleman classes, quasianalytic functions, quasianalytic curve, arc-quasianalytic}
\bottomnote{Research supported in part by an NSERC Undegraduate Student Research Award and NSERC grant OGP0009070.}
The aim of this article is to provide explicit constructions of several examples of functions illustrating pathologies and subtleties in the theory of Denjoy-Carleman classes. In the following, $\mathbb F$ will denote either $\R$ or $\C$. The first example is of a function in any given Denjoy-Carleman class, but not in any smaller Denjoy-Carleman class. 
\begin{thm}\label{thm:nosmallerclass}For any Denjoy-Carleman class $\CC^M$ there exists $f \in \CC^\infty((-1,1),\F)$ satisfying\emph{:}

\begin{enumerate}[label=\emph{(\arabic*)}]
\item \label{sat:nosmallerclass:CM} $f \in \CC^M((-1,1),\F)$\emph{;}
\item \label{sat:nosmallerclass:notCN} for any Denjoy-Carleman class $\CC^N \subsetneq \CC^M$, and any open subset $U \subseteq (-1,1)$, $f \not \in \CC^N(U)$.
\end{enumerate}
\end{thm}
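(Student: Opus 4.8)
The plan is to take $f$ to be a lacunary cosine series whose frequencies form a geometrically separated subsequence of the quotients $m_n := M_n/M_{n-1}$ (a non-decreasing sequence, by log-convexity of $M$). First I would dispose of the trivial case: if $(m_n)$ stays bounded, then $\CC^M$ is the smallest non-trivial Denjoy--Carleman class --- that of functions of finite exponential type --- whose only proper subclasses are the classes of bounded-degree polynomials, so \ref{sat:nosmallerclass:notCN} is immediate for any non-polynomial $f\in\CC^M$; hence assume $m_n\to\infty$. Fix $\lambda>1$ and choose indices $p_1<p_2<\cdots$ greedily --- $p_1=1$, and $p_{k+1}$ the least $p>p_k$ with $m_p\ge\lambda m_{p_k}$ --- and put
$$ f(x)=\sum_{k\ge1}a_k\cos(b_kx),\qquad b_k=m_{p_k},\quad a_k=\frac{M_{p_k}}{m_{p_k}^{\,p_k}}. $$
(Intrinsically $a_k=\exp\left(-\phi^*(\log b_k)\right)$, with $\phi^*$ the Legendre transform of the convex function $\phi(n)=\log M_n$ --- the standard device aligning the coefficients with the convex profile of $M$.) Two properties of the greedy rule drive everything: $b_{k+1}/b_k\ge\lambda$, so $(b_k)$ satisfies a Hadamard gap condition; and for every $n$ there is exactly one $k=k(n)$ with $m_{p_k}\le m_n<\lambda m_{p_k}$, so no exponent gets ``skipped''. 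The $\F$-valued case follows by taking an $\F$-linear combination of two such real series.

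Next I would establish membership, $f\in\CC^M((-1,1),\F)$ --- this is \ref{sat:nosmallerclass:CM}, and the estimate will be uniform on $\R$, so in particular $f\in\CC^\infty$. Since $f^{(n)}(x)=\sum_k a_kb_k^{\,n}\cos(b_kx+n\pi/2)$ it is enough to bound $\sum_k a_kb_k^{\,n}$. A one-line computation gives $a_kb_k^{\,n}=M_{p_k}m_{p_k}^{\,n-p_k}$, and from $M_n=m_1\cdots m_n$ together with monotonicity of $(m_j)$ one checks $M_p\,m_p^{\,n-p}\le M_n$ for all $n$ and $p$ (treating $p\le n$ and $p\ge n$ separately). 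Lacunarity then makes the series collapse to its peak: for $p_k<n$ consecutive terms grow by a factor $\ge\lambda^{\,n-p_{k+1}}\ge1$, and for $p_k>n$ they decay by a factor $\ge\lambda^{\,p_k-n}\ge\lambda$, so $\sum_k a_kb_k^{\,n}\le C_\lambda M_n$ with $C_\lambda$ independent of $n$ and of $x$.

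The heart of the matter is the lower bound: for every nonempty open $U\subseteq(-1,1)$ and all $n\ge n_0(U)$,
$$ \|f^{(n)}\|_{L^\infty(U)}\ \ge\ c_U\,\lambda^{-n}M_n. $$
I would derive it from the standard lower bound for Hadamard-lacunary series --- e.g.\ Ingham's inequality --- which on any interval $I$ gives $\|\sum_k c_k\cos(b_kx+\theta_k)\|_{L^\infty(I)}\ge c(\lambda)\sup\{|c_k|:b_k\ge A/|I|\}$ (apply the inequality to the high-frequency tail, whose successive frequencies are separated by at least a constant multiple of $1/|I|$ once $A=A(\lambda)$ is large, and absorb the finitely many low frequencies). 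Applied to $f^{(n)}$, whose coefficient at $k=k(n)$ equals $M_{p_{k(n)}}m_{p_{k(n)}}^{\,n-p_{k(n)}}\ge\lambda^{-n}M_n$ (directly from $m_{p_{k(n)}}\le m_n<\lambda m_{p_{k(n)}}$ and $M_n=m_1\cdots m_n$) and whose frequency $b_{k(n)}=m_{p_{k(n)}}\to\infty$, this yields the displayed bound as soon as $m_n\ge A/|U|$; the subtracted low-frequency error $\sum_{b_j<A/|U|}a_jb_j^{\,n}$ is negligible since $M_n^{1/n}\to\infty$ outgrows $(A/|U|)^n$. To conclude: if $\CC^N\subsetneq\CC^M$, then (replacing $N$ by its log-convex regularisation if necessary) the standard inclusion criterion for Denjoy--Carleman classes provides $n_j\to\infty$ with $(M_{n_j}/N_{n_j})^{1/n_j}\to\infty$, whence for every $\rho$
$$ \frac{\|f^{(n_j)}\|_{L^\infty(U)}}{\rho^{\,n_j}N_{n_j}}\ \ge\ c_U\left(\frac{(M_{n_j}/N_{n_j})^{1/n_j}}{\lambda\rho}\right)^{n_j}\ \longrightarrow\ \infty, $$
so $f\notin\CC^N(U)$, which is \ref{sat:nosmallerclass:notCN}.

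The step I expect to fight hardest with is the choice of the frequency set, which has to meet two opposing demands: it must be lacunary enough that (i) $\sum_k a_kb_k^{\,n}$ sums to only $C_\lambda M_n$ and (ii) an Ingham/Sidon-type lower bound is available on every, arbitrarily short, interval; yet it must not be so sparse that $\sup_k a_kb_k^{\,n}$ falls much below $\lambda^{-n}M_n$ over some run of exponents. The greedy rule ``$b_{k+1}$ is the first quotient $m_p$ that is at least $\lambda b_k$'' is exactly what threads this needle --- in particular it automatically copes with sequences $(\log m_n)$ that have arbitrarily long gaps, where a naive choice of equally spaced frequencies would pile up uncontrollably many coefficients of size $\approx M_n$ on a single $f^{(n)}$ and destroy membership in $\CC^M$ --- and log-convexity of $M$, i.e.\ monotonicity of $(m_n)$, is used on both sides of the estimate.
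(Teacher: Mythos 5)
There is a genuine gap, and it comes from the normalization of the classes: in this paper $\CC^N(U)$ membership means bounds of the form $|f^{(n)}(y)|\leq AB^n\,n!\,N_n$, with the factorial present, while your construction produces a function whose derivatives are aligned with the sequence $M_n$ alone, not with $n!M_n$. Your upper bound is $\sup_{x}|f^{(n)}(x)|\leq C_\lambda M_n$ and your lower bound is $\|f^{(n)}\|_{L^\infty(U)}\geq c_U\lambda^{-n}M_n$; but to rule out $f\in\CC^N(U)$ you must beat $AB^n\,n!\,N_n$, and the hypothesis $\CC^N\subsetneq\CC^M$ only gives a subsequence with $(M_{n_j}/N_{n_j})^{1/n_j}\to\infty$, which says nothing against the extra factor $n_j!$. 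Your final display compares $\|f^{(n_j)}\|_{L^\infty(U)}$ with $\rho^{n_j}N_{n_j}$ instead of $\rho^{n_j}n_j!N_{n_j}$, and the argument collapses there. Worse, the constructed $f$ can genuinely lie in a smaller class: take $M_n=(n!)^2$ and $N_n=(n!)^{3/2}$ (both log-convex, $\CC^N\subsetneq\CC^M$ since $\sup(N_n/M_n)^{1/n}<\infty$ and $\sup(M_n/N_n)^{1/n}=\infty$); then your bound $|f^{(n)}(x)|\leq C_\lambda(n!)^2\leq C_\lambda\,n!\,N_n$ shows $f\in\CC^N(\R)$, so conclusion \ref{sat:nosmallerclass:notCN} fails outright for this $f$. (Compare the paper's Proposition~\ref{prop:nosmallerclassbuild}, whose lower bound at dyadic rationals is $\recip{2}\recip{3^j}\,j!\,M_j$ — the factorial is essential.) A secondary slip: when $(m_n)$ is bounded, $\CC^M=\CC^\omega$ and the correct reason the case is trivial is that no Denjoy--Carleman class (as defined here, with $N_0=1$ and $N$ non-decreasing) is properly contained in $\CC^\omega$, so \ref{sat:nosmallerclass:notCN} is vacuous; the claim about ``functions of finite exponential type'' with polynomial subclasses is not right in this framework.

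The repair is conceptually small: run your entire scheme on the log-convex weight $\widetilde M_n:=n!M_n$ rather than $M_n$ — i.e., take frequencies among the quotients $\widetilde m_n=n\,m_n$ (which tend to infinity automatically, so no case split is needed) and coefficients aligned with the Legendre profile of $\log\widetilde M_n$. Then the same collapsing-to-the-peak estimate gives $|f^{(n)}(x)|\leq C_\lambda\,n!\,M_n$ (hence \ref{sat:nosmallerclass:CM}), and the Ingham/Zygmund-type lower bound on every interval gives $\|f^{(n)}\|_{L^\infty(U)}\geq c_U\lambda^{-n}n!M_n$, which does contradict $|f^{(n)}|\leq AB^n n!N_n$ along a subsequence with $(M_{n_j}/N_{n_j})^{1/n_j}\to\infty$. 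With that change your route is viable and genuinely different from the paper's: the paper sums Cauchy kernels $(x-(a/b_k+i/m_k))^{-1}$ with poles accumulating over the dyadic rationals and gets pointwise lower bounds by an explicit computation at those points, whereas you use a lacunary trigonometric series (in the spirit of the Fourier-lacunarity approach of \cite{RSW}) and an Ingham-type inequality to get lower bounds on every interval; the latter trades the elementary pole computation for a nontrivial lacunary-series input, whose hypotheses (tail frequency separation relative to $|U|$, absorption of the low-frequency part using $\widetilde M_n^{1/n}\to\infty$) you would still need to verify carefully.
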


The second example is of a function which is formally in a given Denjoy-Carleman class at all points, but is nonetheless not in that class (the notation $f \in \mathcal F^M(x,\mathbb F)$ indicates that $f$ is formally of class $\CC^M$ at $x$; see Definition \ref{def:formally}):
\begin{thm}\label{thm:formaleverywherenot}Let $\CC^M$ be any Denjoy-Carleman class. Then, there exists $f \in \CC^\infty(\R,\F)$ satisfying\emph{:}
\begin{enumerate}[label = \emph{(\arabic*)}]
\item \label{sat:formaleverywherenot:CM}$f \in \CC^M(\R\setminus\{0\},\F)$\emph{;}
\item \label{sat:formaleverywherenot:formal}$f \in \mathcal F^M(0,\F)$\emph{;}
\item \label{sat:formaleverywherenot:notCM}$f \not \in \CC^M(\R,\F)$.
\end{enumerate}
\end{thm}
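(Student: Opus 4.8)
I would split the argument according to whether $\CC^M$ is quasianalytic; the quasianalytic case is almost immediate, and the non-quasianalytic case carries all the weight.

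\smallskip
\noindent\textbf{Quasianalytic $\CC^M$.} Take $f(x)=e^{-1/x^2}$ for $x\ne 0$ and $f(0)=0$, the classical flat function. It is $\CC^\infty$ on $\R$ with vanishing Taylor series at $0$, so \satref{sat:formaleverywherenot:formal} is trivial; it is real-analytic on $\R\setminus\{0\}$, hence in $\CC^M(\R\setminus\{0\},\F)$ (using $\CC^\omega\subseteq\CC^M$), so \satref{sat:formaleverywherenot:CM} holds; and it cannot belong to $\CC^M(\R,\F)$, since a nonzero element of a quasianalytic class cannot vanish to infinite order at a point, so \satref{sat:formaleverywherenot:notCM} holds.

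\smallskip
\noindent\textbf{Non-quasianalytic $\CC^M$.} Here $M_n/M_{n-1}\to\infty$, so $\CC^\omega\subsetneq\CC^M$ and $\CC^M$ admits bump functions. The plan is: (i) choose a model bump $b\in\CC^M(\R,\F)$, supported in $(-1,1)$, lying in \emph{no} strictly smaller Denjoy--Carleman class — for instance the product of the function of Theorem \ref{thm:nosmallerclass} with a $\CC^M$ plateau equal to $1$ near $0$; via the observation that the class attached to the sequence $\sup_x|b^{(n)}(x)|$ coincides with $\CC^M$, this yields a lower bound $\sup_x|b^{(n)}(x)|\ge c\,\rho^{-n}M_n$ for all $n$, with $c,\rho>0$ fixed. (ii) Set $x_k=2^{-k}$, $r_k=4^{-k}$, and pick $\epsilon_k\downarrow 0$ so fast that $\sum_k\epsilon_k r_k^{-n}<\infty$ for every $n$; put
$$ f(x):=\sum_{k\ge 1}\epsilon_k\, b\!\left(\tfrac{x-x_k}{r_k}\right). $$
The summands have pairwise disjoint supports accumulating only at $0$, so $f\in\CC^\infty(\R\setminus\{0\},\F)$, and the summability forces $f$ and all its derivatives to tend to $0$ at $0$; hence $f$ extends $\CC^\infty$-ly across $0$ and is flat there, giving \satref{sat:formaleverywherenot:formal}. (iii) On any compact subset of $\R\setminus\{0\}$, $f$ is a finite sum of rescaled translates of $b\in\CC^M$, giving \satref{sat:formaleverywherenot:CM}. (iv) If $f$ were $\CC^M$ on some $(-\delta,\delta)$ with constants $C,R$, then restricting to the support of the $k$th summand (for $k$ large, with $Rr_k\rho<1$) gives $\sup_x|b^{(n)}(x)|\le (C/\epsilon_k)(Rr_k)^nM_n$ for all $n$; combined with the lower bound of (i) this forces $0<c\epsilon_k/C\le (Rr_k\rho)^n$ for all $n$ — absurd. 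Hence \satref{sat:formaleverywherenot:notCM}.

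\smallskip
\noindent The main obstacle is step (i) of the non-quasianalytic case: producing a compactly supported $\CC^M$ bump that genuinely ``saturates'' the class, and converting the qualitative conclusion of Theorem \ref{thm:nosmallerclass} into the quantitative estimate $\sup_x|b^{(n)}(x)|\ge c\,\rho^{-n}M_n$ (equivalently, that the sequence $\sup_x|b^{(n)}(x)|$ is, up to geometric factors, no smaller than $M_n$). Once $b$ is in hand the rest is bookkeeping, the only delicate point being the order of choices — the scales $r_k$ first, then the amplitudes $\epsilon_k$ small in terms of all earlier data — which simultaneously secures smoothness and flatness of $f$ at $0$ and keeps each rescaled bump as wild as $\CC^M$ permits.
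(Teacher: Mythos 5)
Your architecture is genuinely different from the paper's: the paper never uses bump functions or flatness at $0$ (its $f=\sum_k 2^{-k}h_k(x-a_k)$, with $h_k$ from Proposition~\ref{prop:elementarybuildingblocks} applied to modified sequences $M^k$, has $|f^{(n)}(a_n)|\geq n^n n!M_n$ but only $|f^{(j)}(0)|\leq 2e^jj!M_j$, and the same construction covers every non-analytic class, quasianalytic or not). Your quasianalytic shortcut via $e^{-1/x^2}$ is correct, and steps (ii)--(iv) are sound bookkeeping (you drop the $j!$'s, but consistently on both sides, so the contradiction $c\epsilon_k/C\leq(Rr_k\rho)^n$ survives). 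The problem is step (i), exactly where you locate it, and it is a genuine gap rather than a routine verification: knowing that $b\in\CC^M(\R)$ lies in no strictly smaller Denjoy--Carleman class only tells you that for each admissible (log-convex) $N$ with $\CC^N\subsetneq\CC^M$ and each $B>0$ there are \emph{infinitely many} $n$ with $\sup_x|b^{(n)}(x)|>B^nn!N_n$; the exceptional subsequence depends on $N$ and $B$, and this does not by itself produce the uniform-in-$n$ estimate $\sup_x|b^{(n)}(x)|\geq c\rho^{-n}n!M_n$. Your phrase ``the class attached to the sequence $\sup_x|b^{(n)}(x)|$ coincides with $\CC^M$'' presupposes that this sequence (after dividing by $n!$ and normalizing) is equivalent to a log-convex, non-decreasing weight whose class still contains $b$; that is a Cartan--Gorny type regularization theorem about essential log-convexity of derivative sup-norms of compactly supported functions, not an observation, and it appears nowhere in the paper. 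Since Theorem~\ref{thm:nosmallerclass} quantifies only over log-convex weights, you cannot avoid this regularization step if you insist on deducing the quantitative bound from the qualitative ``no smaller class'' statement.

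The good news is that the lemma you need is true and can be had much more cheaply, in the spirit of the paper: take $b:=h\chi$, where $h$ is the function of Proposition~\ref{prop:elementarybuildingblocks} for $\CC^M$ (so $|h^{(n)}(0)|\geq 2^{-n}n!M_n$ and $|h^{(j)}(x)|\leq j!M_j$ everywhere) and $\chi$ is a compactly supported $\CC^M$ plateau equal to $1$ near $0$ (available by non-quasianalyticity). Then $b\in\CC^M(\R)$ is compactly supported, and since $\chi\equiv 1$ near $0$, Leibniz gives $b^{(n)}(0)=h^{(n)}(0)$, hence $\sup_x|b^{(n)}(x)|\geq 2^{-n}n!M_n$ for every $n$ --- precisely the saturation your steps (ii)--(iv) require, obtained from a lower bound at a single point rather than from sup-norm information extracted from Theorem~\ref{thm:nosmallerclass}. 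With that substitution your proof closes, and it is a legitimately different (flatness-plus-scaling) route to Theorem~\ref{thm:formaleverywherenot}; note only that it proves $f\in\mathcal F^M(0,\F)$ in the degenerate way (flatness), whereas the paper's example is not flat at $0$.
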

We remark that if $f \in \CC^\infty(U,\F)$, where $U \subseteq \R^p$ is open, and $f \in \mathcal F(x,\F)$ for all $x \in U$, then there is an open dense subset $V$ of $U$ such that $f \in \CC^M(V,\F)$ (see Proposition~\ref{prop:formalmeansalmostCM}).

Like the second example, the third example is ``close'' to being $\CC^M$, but is not actually: it is smooth and its composition with every quasianalytic curve of a given quasianalytic Denjoy-Carleman class is in the class, yet is not itself in the class.
\begin{thm}\label{thm:bigtheorem}For any $p \geq 2$ and any quasianalytic Denjoy-Carleman class $\CC^M$, which is not the class of analytic functions, there exists $f \in \CC^\infty(\R^p)$ such that for any curve $\gamma \in \CC^M(U,\R^p)$ (where $U \subseteq \R$ is open), $f\circ\gamma \in \CC^M(U,\F)$, but $f \not \in \CC^M(\R^p,\F)$.
\end{thm}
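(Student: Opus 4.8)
I would first reduce to $p=2$. If $\pi\colon\R^p\to\R^2$ is the projection onto the first two coordinates and $g\in\CC^\infty(\R^2,\F)$ has the stated property for $\R^2$, then $f:=g\circ\pi$ works on $\R^p$: for any $\CC^M$ curve $\gamma\colon U\to\R^p$ the map $\pi\circ\gamma$ is again a $\CC^M$ curve, so $f\circ\gamma=g\circ(\pi\circ\gamma)\in\CC^M(U,\F)$; and restricting $f$ to the coordinate plane $\{x_3=\dots=x_p=0\}$ recovers $g$, so $f\in\CC^M(\R^p)$ would force $g\in\CC^M(\R^2)$. Hence assume $p=2$.

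The task is then to build $g$ on $\R^2$. The plan is to make $g$ honestly $\CC^M$ off a ``critical set'' $Z$ (a point, or a line through the origin), $C^\infty$ across $Z$, but with $\CC^M$-constants that degrade without bound near $Z$, so that $g\notin\CC^M(\R^2)$. A key structural constraint is that this failure \emph{must not} be caused by $g$ being flat along $Z$: if $g$ were flat along $Z$ and not identically zero nearby, a $\CC^M$ curve crossing $Z$ transversally would yield $f\circ\gamma$ flat at the crossing but nonzero nearby, hence---by quasianalyticity of $\CC^M$---not of class $\CC^M$. So $g$ must be non-flat along $Z$ while its derivatives nonetheless blow up near $Z$. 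Concretely I would take a bounded, $\CC^M$-extremal function $\psi\in\CC^M(\R,\F)$---with $\sup_\R|\psi^{(n)}|\le A^nM_n$ for all $n$ and lying, on every interval, in no strictly smaller Denjoy--Carleman class (a bounded variant of the function from Theorem~\ref{thm:nosmallerclass})---and superpose rescaled copies $\psi(\lambda_k\,\cdot\,)$ at scales $\lambda_k\nearrow\infty$ crowding towards $Z$, each carried by a one-dimensional profile in the transverse variable and weighted by an amplitude $c_k$. The data $(\lambda_k,\mathrm{profile}_k,c_k)$ are calibrated so that: (i) the series converges in $\CC^\infty(\R^2)$; (ii) near any point off $Z$ only finitely many summands matter, so $g$ is $\CC^M$ there; (iii) the envelope over $k$ of the $k$-th summand's derivatives overtakes every bound $CA_0^nM_n$ in every neighborhood of $Z$, so $g\notin\CC^M$; and (iv) the restriction of $g$ to $Z$, and more generally to any cross-section a curve can see inside $Z$, is $\CC^M$, so $f\circ\gamma\in\CC^M$ whenever $\gamma$ runs inside $Z$. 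Being able to arrange (i)--(iv) simultaneously---especially to push $\lambda_k\to\infty$ fast enough for (iii) while keeping (i) and the curve property below---is precisely where $\CC^M\supsetneq\CC^\omega$ enters: it supplies the room between the analytic scale and the $\CC^M$ scale that the calibration requires, and that is unavailable when $\CC^M=\CC^\omega$. Quasianalyticity enters through the fact that a $\CC^M$ curve $\gamma=(\gamma_1,\gamma_2)$ meeting $Z$ at $t_0$ either has the relevant component $\equiv 0$ there (reducing to (iv)) or vanishes to a \emph{finite} order $b$; this finite $b$ caps the rate at which $\gamma$ can sweep across the crowded scales.

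The main obstacle is the remaining case: showing $f\circ\gamma\in\CC^M$ when $\gamma$ meets $Z$ at $t_0$ but is not locally contained in $Z$. Writing $f\circ\gamma=\sum_k c_k\Psi_kE_k$ with $\Psi_k=\psi\circ(\lambda_k\gamma_1)$ (of $\CC^M$-radius $\asymp\lambda_k$) and $E_k$ the pulled-back profile, the naive bound for $(f\circ\gamma)^{(j)}$---term by term, using closure of $\CC^M$ under composition, Fa\`a di Bruno and the triangle inequality over $k$---gains a spurious factorial and lands $f\circ\gamma$ only in a strictly larger Denjoy--Carleman class; moreover a careless choice of parameters genuinely fails here. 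One must instead use the fine geometry of the crossing: for $\gamma_2$ of order $b$ the curve meets the support of the $k$-th profile only on a short parameter window $I_k$ about some $t_k$, these windows are disjoint and accumulate at $t_0$, on $I_k$ the other summands are negligible, and one converts the smallness of $c_k$ relative to $|I_k|$ into genuinely improved derivative bounds for the $k$-th term via Landau--Kolmogorov / finite-difference interpolation inequalities, applied scale by scale, so that the summation over $k$ closes to a clean $\CC^M$ estimate uniform in the curve. Making this balance work---choosing $(\lambda_k,\mathrm{profile}_k,c_k)$ so that (i)--(iv) hold \emph{and} the interpolation argument succeeds for every $\CC^M$ curve at once---is where essentially all the difficulty lies.
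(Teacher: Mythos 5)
Your proposal correctly identifies the constraints (the failure of $\CC^M$ must not come from flatness of $f$ itself, quasianalyticity forces a $\CC^M$ curve component to vanish to finite order or vanish identically), but it stops short of the one idea that actually makes the theorem work, and the substitute you sketch is exactly the part you admit is unproved. In the paper, the loss of the $\CC^M$ bounds is concentrated at a \emph{sequence of points} $a_k\to 0$ lying on the exponentially flat arc $\{(t,e^{-1/t^2},0,\dots,0)\}$, and the constructed $f$ is shown to be honestly of class $\CC^M$, in the sense of the definition on closed sets, on every polynomial horn $\mathcal S^p_{a,m}=\{x_1\ge 0,\ x_2\ge ax_1^m\}$ and on $\R^p\setminus\mathcal Q^p$ (Lemma~\ref{lemma:Sdistance} guarantees the $a_k$ stay at distance $\gtrsim e^{-1/t^2}$ from each horn, which is what makes the term-by-term estimates close). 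Then Theorem~\ref{thm:bigtheorem} needs \emph{no} estimate along the curve at all: if $\gamma(t_0)=0$, quasianalyticity gives either $\gamma_1\equiv 0$, or $\gamma_2\equiv 0$, or finite orders of vanishing $k_1,k_2$, and in each case the factorization $\gamma_i(t)=t^{k_i}\delta_i(t)$ shows $\im\gamma$ lies locally in $(\R^p\setminus\mathcal Q^p)\cup\mathcal S^p_{a,m}$ for suitable $a,m$; the composition theorem for $\CC^M$ maps on closed sets (Theorem~\ref{thm:composition}) finishes the proof. So the whole difficulty is displaced into the multivariate construction and the horn estimates, not into any analysis of $f\circ\gamma$.

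Your plan instead places the singular set $Z$ at a point or a line, accepts that $\CC^M$ curves will genuinely cross the region where the bounds degrade, and proposes to rescue $f\circ\gamma$ by disjoint parameter windows plus Landau--Kolmogorov/finite-difference interpolation, calibrated uniformly over all $\CC^M$ curves. That is the crux of the theorem, and it is not carried out: you give no choice of $(\lambda_k,\mathrm{profile}_k,c_k)$, no statement of the interpolation inequality you would apply at scale $k$, and no argument that the gain it provides beats the blow-up you have deliberately built in (indeed, if the bad scales were reachable by a curve with finite-order tangency, one expects a polynomially parametrized curve to \emph{detect} the failure, which is precisely why the paper pushes the bad points onto an exponentially flat arc that no quasianalytic curve can follow). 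As written, the proposal is a plausible research plan whose central step is missing, rather than a proof; to repair it along the paper's lines you would need (a) the bad set to be tangent to $Z$ to infinite order, (b) a proof that $f$ is $\CC^M$ on the closed horn-type regions that finite-order curves are confined to, and (c) an appeal to the $\CC^M$ composition theorem, replacing the interpolation argument entirely.
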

\par Theorem~\ref{thm:bigtheorem} follows easily from the following result:
\begin{thm}\label{thm:masterthm}For any $p \geq 2$ and any Denjoy Carleman class $\CC^M$, which is not the class of analytic functions, there exists $f \in \CC^\infty(\R^p,\F)$ satisfying\emph{:}
\begin{enumerate}[label = \emph{(\arabic*)}]
\item \label{sat:masterthm:CM}$f \in \CC^M(\R^p\setminus\{0\},\F)$\emph{;}
\item \label{sat:masterthm:polyCM}for any $a > 0$ and integer $m \geq 1$, $f \in \CC^M(\mathcal S_{a,m}^p,\F)$\emph{;}
\item \label{sat:masterthm:quadCM}$f \in \CC^M(\R^p \setminus \mathcal Q^p,\F)$\emph{;}
\item \label{sat:masterthm:notCM}$f \not \in \CC^M(\R^p,\F)$,
\end{enumerate}
where \[\mathcal S_{a,m}^p := \{x=(x_1,x_2,\ldots,x_p) \in \R^p \: x_1 \geq 0 \text{ and } x_2 \geq ax_1^m\}\]
and 
\[\mathcal Q^p := \{x=(x_1,x_2,\ldots,x_p) \in \R^p \: x_1 > 0 \text{ and } x_2 > 0\}.\]
\end{thm}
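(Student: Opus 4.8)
The plan is to realize $f$ as an infinite superposition of Gaussian bumps of extremely rapidly shrinking width, concentrated near a sequence of points $q_k\to0$ that approach the origin through the thin ``cusp'' wedged between the hyperplane $\{x_2=0\}$ and \emph{every} parabolic region $\mathcal S_{a,m}^p$. Explicitly, I would take
\[
f(x)\;=\;\sum_{k\ge1}h_k\,\exp\!\Bigl(-\tfrac{|x-q_k|^2}{w_k^2}\Bigr),\qquad q_k=(r_k,s_k,0,\dots,0)\in\R^p,
\]
and choose positive sequences $r_k,s_k,w_k,h_k\downarrow0$ subject to four requirements. First (geometry): $s_k<r_k$ and $s_k/r_k^m\to0$ for every $m\ge1$, so that for each fixed $a>0$ and $m$ the point $q_k$ eventually lies strictly below $x_2=ax_1^m$ (hence outside $\mathcal S_{a,m}^p$) yet stays inside $\mathcal Q^p$, and for $k$ large $\dist(q_k,\mathcal S_{a,m}^p)\gtrsim_{a,m}r_k^m$ while $\dist(q_k,\R^p\setminus\mathcal Q^p)=s_k$. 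Second: $w_k\ll s_k^2$ (so a fortiori $w_k$ is smaller than every power of $r_k$), and moreover $w_k$ so small --- calibrated to the sequence $M$ --- that the rescaled bumps accumulate into a function that is worse than $\CC^M$ near $0$. Third: the centres are $w_k$-separated, $\min_{j\ne k}|q_j-q_k|/w_k\to\infty$, which is automatic once $w_k\to0$ fast, since $|q_j-q_k|\ge|r_j-r_k|$. Fourth: $h_k$ small enough that $\sum_k h_k w_k^{-n}<\infty$ for every $n$ --- which already guarantees $f\in\CC^\infty(\R^p,\F)$ --- and $\sum_k h_k<\infty$. No separate reduction to $p=2$ is needed: since every $q_k$ lies in the $(x_1,x_2)$-plane and the sets $\mathcal S_{a,m}^p$, $\mathcal Q^p$ constrain only $(x_1,x_2)$, the Gaussians do the localization in all $p$ coordinates simultaneously.

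To get \ref{sat:masterthm:notCM}, differentiate in $x_1$ at the bump centres. By the $w_k$-separation, the bumps with $j\ne k$ and all their derivatives are negligible at $q_k$, so, using $|\partial_t^{2n}e^{-t^2}(0)|=(2n)!/n!\ge\sqrt{(2n)!}$, one gets $|\partial_{x_1}^{2n}f(q_k)|\ge\tfrac12 h_k w_k^{-2n}\sqrt{(2n)!}$ for $k$ large. With $M'_n:=M_n/\sqrt{n!}$ and $\theta_{M'}(t):=\inf_n t^nM'_n$, this forces $\sup_{k,n}|\partial_{x_1}^{2n}f(q_k)|/(\rho^{2n}M_{2n})\gtrsim\sup_k h_k/\theta_{M'}(\rho w_k)$ for each $\rho>0$; since the $q_k$ accumulate at $0$, a suitable calibration of $w_k$ (and then $h_k$) makes this supremum infinite for every $\rho$, so $f\notin\CC^M$ on any neighbourhood of $0$, giving \ref{sat:masterthm:notCM}.

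The three membership claims \ref{sat:masterthm:CM}, \ref{sat:masterthm:polyCM}, \ref{sat:masterthm:quadCM} all follow from a single estimate. Let $K$ be a compact subset of $\R^p\setminus\{0\}$, of $\mathcal S_{a,m}^p$, or of $\R^p\setminus\mathcal Q^p$; in each case all but finitely many bumps are centred at distance $\delta_k$ from $K$ with $\delta_k^2/w_k\to\infty$ --- namely $\delta_k\asymp\dist(K,0)$, $\delta_k\gtrsim r_k^m$, or $\delta_k=s_k$, respectively. By the Hermite-type estimate $|\partial^\alpha e^{-|\cdot|^2}|\lesssim C^{|\alpha|}\sqrt{|\alpha|!}\,e^{-|\cdot|^2/2}$, the $\CC^M$-seminorm of the $k$-th bump on $K$ (with a fixed radius) is at most $h_k\exp\!\bigl(\omega_{M'}(C/w_k)-c\,\delta_k^2/w_k^2\bigr)$, where $\omega_{M'}$ denotes the associated function of $M'$. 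Because $\CC^M$ contains the analytic class we have $M_n\gtrsim c^n n!$, hence $(M'_n)^{1/n}\to\infty$ and therefore $\omega_{M'}(r)=o(r)$; since $\delta_k^2/w_k^2$ eventually dominates $1/w_k$ --- precisely what $w_k\ll\delta_k^2$ secures --- the Gaussian factor overwhelms $e^{\omega_{M'}(C/w_k)}$ and the series of seminorms converges. Adding the finitely many remaining bumps (which are analytic) yields $f\in\CC^M(K,\F)$.

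The crux is the tension between \ref{sat:masterthm:notCM} and \ref{sat:masterthm:CM}--\ref{sat:masterthm:quadCM}: the widths $w_k$ must collapse fast enough for the superposition genuinely to escape $\CC^M$ at $0$, yet the far-field control must hold for \emph{every} non-analytic Denjoy--Carleman class --- not merely Gevrey-type ones --- which is why one has to invoke the sharp relation $\omega_{M'}(r)=o(r)$ (equivalently $M_n^{1/n}/\sqrt n\to\infty$) rather than any cruder bound, and why the gaps $\delta_k$ between the bumps and $\mathcal S_{a,m}^p$, $\R^p\setminus\mathcal Q^p$ must be kept only polynomially (not exponentially) small against $w_k$. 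Verifying that positive sequences with properties one through four coexist --- for instance $w_k$ rapidly decreasing and $M$-calibrated, $s_k=w_k^{1/4}$, $r_k=1/|\log w_k|$ --- is then a routine, if fiddly, bookkeeping matter.
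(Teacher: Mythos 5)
Your plan is a genuinely different construction from the paper's (Gaussian bumps versus translated sums of Cauchy kernels composed with $\|x\|^2$), but it has a genuine gap at its crux. The step ``Because $\CC^M$ contains the analytic class we have $M_n\gtrsim c^n n!$, hence $(M'_n)^{1/n}\to\infty$ and therefore $\omega_{M'}(r)=o(r)$'' is false under the paper's convention: in \eqref{eq:DCdef} the factorial $|\alpha|!$ is already factored out, so the analytic class is $M\equiv 1$ and ``non-analytic'' means only $M_n^{1/n}\to\infty$. For example $M_n=(\log(n+e))^n$ is an admissible log-convex (even quasianalytic) non-analytic sequence with $M_n^{1/n}/\sqrt{n}\to 0$; then $M'_n=M_n/\sqrt{n!}$ decays super-exponentially, and the associated function of the sequence $\sqrt{j!}\,M_j$ that actually appears when you compare your Hermite bound $C^{|\alpha|}\sqrt{|\alpha|!}\,w_k^{-|\alpha|}$ against $B^{|\alpha|}|\alpha|!M_{|\alpha|}$ grows like $r^2/(\log r)^2$, not $o(r)$. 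Consequently the inequality you rely on --- $\delta_k^2/w_k^2\gg 1/w_k\ge\omega_{M'}(C/w_k)$, secured by ``$w_k\ll\delta_k^2$'' --- fails for such classes: you would need $\delta_k\gtrsim 1/\log(1/w_k)$, i.e.\ $w_k$ \emph{exponentially} small in $1/\delta_k$, which is the opposite of your closing remark that the gaps need only be kept polynomially small against $w_k$. The correct general fact is $\omega_{\sqrt{j!}M_j}(t)=o(t^2)$ (the maximizing index is $o(t^2)$ precisely because $M_n^{1/n}\to\infty$), and with it the Gaussian scheme might be repairable, but then the choice of $w_k,h_k$ becomes a delicate interleaving with the non-membership requirement $h_k\ge k\,e^{-\omega_{\sqrt{j!}M_j}(c/(k w_k))}$, which you dismiss as ``a suitable calibration''; as written, the membership estimates \satref{sat:masterthm:polyCM} and \satref{sat:masterthm:quadCM} do not go through for every non-analytic $\CC^M$.

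A secondary gap: the claim that by $w_k$-separation the bumps with $j\neq k$ are ``negligible'' at $q_k$. The condition $\min_{j\neq k}|q_j-q_k|/w_k\to\infty$ controls the $k$-th bump at the other centres, but the later, much thinner bumps carry derivative prefactors $w_j^{-2n}\gg w_k^{-2n}$ at order $2n$, so negligibility at $q_k$ requires $e^{-|q_j-q_k|^2/(2w_j^2)}$ to beat $w_j^{-2n}$ at the finitely many orders attached to stage $k$ --- a recursive choice of $w_j$ after those orders are fixed, not a consequence of the stated separation. For comparison, the paper sidesteps both issues: its building blocks (Proposition~\ref{prop:ndbuildingblocks}) obey \emph{analytic} bounds $|\alpha|!\,\|x-a_k\|^{-2(|\alpha|+1)}$ away from the centre, so the far-field control on $\mathcal S_{a,m}^p$ and $\R^p\setminus\mathcal Q^p$ reduces to the log-convexity inequality $M_k^{1/k}\le M_{|\alpha|}^{1/|\alpha|}$ together with Lemma~\ref{lemma:Sdistance}, while the failure of $\CC^M$ at $0$ is produced by the free constants $c_k$ in the modified sequences $M^k$, chosen after the geometry is fixed; no comparison between a Gaussian tail and an associated function is ever needed.
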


Denjoy-Carleman classes have been classically studied in their relation to PDE theory, harmonic analysis, and other fields. Recently, there has been renewed interest in these classes from a more analytic-geometric viewpoint. The theory of Denjoy-Carleman classes is usually divided into the study of \emph{quasianalytic} classes, characterized by an analogue of analytic continuation: all the derivatives at a point of a function in such a class uniquely determines the function (at least locally), and \emph{non-quasianalytic} classes.

However, despite quasianalytic classes satisfying ``quasianalytic continuation'', their theory remains not well-understood. This is in a large because many standard techniques for analytic functions, namely the Weierstrass division and preparation theorems, fail in general for quasianalytic Denjoy-Carleman classes (see \cites{ACQ, ChauCho, Childress, ParRol, Thilliez}). This makes deciding whether these classes are Noetherian very difficult.

In relation to Theorem~\ref{thm:nosmallerclass}, several results are known. It is a classical result that each Denjoy-Carleman class contains functions which are not in any smaller class \cite[Thm.\,1]{Thilliez}. More recently, \cite[Thm.\,2]{RSW} shows that there is a function in a given \emph{quasianalytic} Denjoy-Carleman ring which is nowhere analytic. This was proven by examining ``lacunarity'' properties of Fourier series. Theorem~\ref{thm:nosmallerclass} can be seen as a strenghthening of the conclusion of the first result and as a generalization of the second.

By a classical theorem of Carleman (see \cite[Thm.\,3]{Thilliez}), there is a smooth function germ which is formally quasianalytic of a given class, but does not correspond to any actual quasianalytic function germ of the same class. Recently, another example of such a non-extendable function was constructed in \cite[Thm.\,1.2]{ACQ}. Like these examples, the function of Theorem~\ref{thm:formaleverywherenot} is formally of a given Denjoy-Carleman class, yet fails to be of actually of the class. There are two main differences between Theorem~\ref{thm:formaleverywherenot} and both Carleman's function and that of \cite[Thm.\,1.2]{ACQ}: Theorem~\ref{thm:formaleverywherenot} involves arbitrary Denjoy-Carleman classes instead of quasianalytic classes, but does not consider the question of whether the germ is extendable. In fact, in the so-called \emph{strongly non-quasianalytic} case, the function must be extendable (\cite[Thm.\,4]{Thilliez}). Furthermore, the function constructed in \cite{ACQ} is formally in the given Denjoy-Carleman class only on $[0,\infty)$, whereas that of Theorem~\ref{thm:formaleverywherenot} is formally in the given Denjoy-Carleman class on the entire real line.

Given certain classes $\CC$ of real- or complex-valued functions of several real variables, it is a natural to consider whether a function $f$, is of class $\CC$ provided that $f$ is of class $\CC$ on every curve of class $\CC$. In \cite{Boman}, Boman considers the question in the case $\CC = \CC^\infty$, and answers it in the affirmative. In \cite{BMAnal}, Bierstone, Milman, and Parusi\'{n}ski answered the question in the negative for the class of analytic functions, showing that a function which is analytic on every analytic curve (a so-called ``arc-analytic function'') is not necessarily even continuous. In fact, their example works for any class of quasianalytic functions. In \cite[Thm.\,3.9]{RainerNQuas} and \cite[Thm.\,2.7]{RainerQuas}, Kriegly, Michor, and Rainer answer the problem in the affirmative where $\CC = \CC^M$ is a \emph{non}-quasianaltyic Denjoy-Carleman class. In \cite{RainerQuas} they also raise the question, if $\CC^M$ is a quasianalytic Denjoy-Carlemean, whether a \emph{smooth} function which is of class $\CC^M$ along each $\CC^M$ curve is of class $\CC^M$. Theorem~\ref{thm:bigtheorem} answers this questions, and provides an example of a function which is smooth, and quasianalytic of a given class $\CC^M$ on every $\CC^M$ curve (called ``arc-quasianalytic'' in \cite{BMarc}), yet not itself $\CC^M$.

The author's research was conducted as an NSERC Undergraduate Summer Research project under the supervision of Edward Bierstone. The author would like to thank Dr. Bierstone for raising the question treated in Theorem~\ref{thm:bigtheorem} and for his numerous suggestions for this article. The author is grateful to both Dr. Bierstone and Andr\'{e} Belotto for helping him develop his ideas, and to Armin Rainer and David Nenning for pointing out helping correct numerous errors in the first draft of this article.

\section{Preliminaries}

Below we give several basic definitions.

$\N$ denotes the set of non-negative integers.
For a multi-index $\alpha = (\alpha_1,\ldots,\alpha_p) \in \N^p$, set:
\begin{align*}
|\alpha| &:= \alpha_1 + \cdots + \alpha_p\\
D^\alpha &:= \frac{\partial^{|\alpha|}}{\partial x_1^{\alpha_1} \cdots \partial x_p^{\alpha_p}}\\
\alpha! &:= \alpha_1!\cdots\alpha_p!
\end{align*}

If $p \geq 2$, we denote the Euclidean norm on $\R^p$ by \[||x|| = ||(x_1,\ldots,x_p)|| = \sqrt{x_1^2+\cdots+x_p^2}\ .\]
For any bounded subset $S \subseteq \R^p$, we write \[||S|| := \sup_{x \in S} ||x|| < \infty.\]
If $p \geq 1$, $t \in \R$, $a \in \R^p$, $S \subseteq \R^p$, we write \[tS\pm a := \{ts\pm a \: s \in S\}.\]

We also denote by $\mathcal C^\infty(U,\F)$ the $\F$-algebra of smooth (infinitely-differentiable) $\F$-valued functions on an open set $U \subseteq \R^p$, and $\CC^\infty$ the class of all smooth functions. Unless otherwise specified, we write $\CC^\infty(U)$ for $\CC^\infty(U,\C)$. Likewise, we denote by $\CC^\omega(U,\F)$ the corresponding algebra of analytic functions on $U$, and $\CC^\omega$ the class of all analytic functions. Unless otherwise specified, we write $\CC^\omega(U)$ for $\CC^\omega(U,\C)$.

Let $M = (M_n)_{n=0}^\infty$ be a non-decreasing sequence of positive real numbers with $M_0 = 1$.
\begin{defn} For an open set $U \subseteq \R^p$, we say that a function $f \in \CC^\infty(U,\F)$ belongs to the set $\CC^M(U,\F)$ if either of the following two equivalent conditions holds:
\begin{enumerate}[label = (\roman*)]
\item for any $x \in U$, there exists some open $V \subseteq U$ containing $x$ and constants $A,B > 0$ such that, for any multi-index $\alpha \in \N^p$ and $y \in V$
\begin{equation}\label{eq:DCdef} |D^\alpha f(y)| \leq AB^{|\alpha|}|\alpha|!M_{|\alpha|};\end{equation}
\item for any compact set $K \subseteq \R^p$ contained in $U$, there are $A,B > 0$, such that for all $y \in K$, \eqref{eq:DCdef} holds.
\end{enumerate}
In this case, we will say that $f$ is of class $\CC^M$. $\CC^M$ is called a ``Denjoy-Carleman'' class.
\end{defn}
\begin{rk}Note that if $M = (M_n)_{n=0}^\infty$ is identically $1$, then $\CC^M$ is the class $\CC^\omega$ of analytic functions. We will call a Denjoy-Carleman class $\CC^M$ ``non-analytic'' if $\CC^M \neq \CC^\omega$.
\end{rk}
\begin{defn}\label{def:formally} We say that a function $f \in \CC^\infty(U,\F)$ is \emph{formally} $\CC^M$ at a point $y \in U$, if there are $A,B > 0$ such that \eqref{eq:DCdef} holds; in this case we write $f \in \mathcal F^M(y,\F)$ (i.e. the coefficients of the formal power series of $f$ at $y$ satisfy bounds similar to those in \eqref{eq:DCdef}).
\end{defn}
\begin{defn} Given a closed subset $C \subseteq \R^p$, we say that $f:C\to\F$ is in $\CC^M(C,\F)$ if there is some open set $U \supseteq C$ such that $f \in \CC^\infty(U,\F)$, and, for each $x \in C$, there is an open neighbourhood $V$ containing $x$, such that \ref{eq:DCdef} holds for all $y \in V\n C$, with suitable $A,B>0$.
\end{defn}

For any open or closed $S \subseteq \R^p$, we write $\CC^M(S)$ for $\CC^M(S,\C)$. Likewise, we always write $\mathcal F^M(x)$ for $\mathcal F^M(x,\C)$.
\begin{rk}\label{rk:finitelmanydonotmatter}Note that in all of the above definitions, the requirement of having upper bounds on \emph{all} derivatives is actually equivalent to the apparently weaker requirement that there is an upper bound of the same form on \emph{all but finitely many} of the derivatives.\end{rk}

In order that Denjoy-Carleman classes satisfy useful properties, one imposes the condition that $M$ is \emph{logarithmically convex}, i.e. the ratios $M_{n+1}/M_n$ form a non-decreasing sequence. This condition implies  that the sequence $M_n^{1/n}$ is also non-decreasing (see \cite[\S1.3]{Thilliez}). Because of the Leibniz rule, logarithmic convexity implies that the sets $\CC^M(U,\F)$ are closed under multiplication (for $U$ open in $\R^p$). Since $\CC^M(U,\F)$ is also closed under addition, the logarithmic convexity of $M$ implies that $\CC^M(U,\F)$ forms an $\F$--subalgebra of $\CC^\infty(U,\F)$. For the remainder of this article, we work exclusively work with Denjoy-Carleman classes $\CC^M$, for $M$ logarithmically-convex.

It is also sometimes required that
\begin{equation}\sup_{n \geq 1} \left(\frac{M_{n+1}}{M_n}\right)^{1/n} < \infty.\end{equation}
This condition is equivalent to stablility under differentiation of $\CC^M(U,\F)$ (\cite[Cor.\,2]{Thilliez}). However, none of the results in this article assume this fact.

For two Denjoy-Carleman classes $\CC^M$ and $\CC^N$, $\CC^M(U,\F) \subseteq \CC^{N}(U,\F)$ if and only if
\begin{equation}\label{eq:insidecondition}\sup_{n \geq 1} \left(\frac{M_n}{N_n}\right)^{1/n} < \infty\end{equation}
(see \cite[\S1.4]{Thilliez}). In particular, $\CC^M(U,\F) = \CC^\omega(U,\F)$ if and only if $\sup_{n \geq 1} M_n^{1/n} < \infty$. We write $\CC^M \subseteq \CC^N$ if \eqref{eq:insidecondition} holds.
\begin{defn}A mapping $g:U\to \R^p$, where $U \subseteq \R^p$ is open, is said to be of class $\CC^M$ if each component function $g_i \in \CC^M(U,\R)$, where $g = (g_1,\ldots,g_p)$. In this case, we write $g \in \CC^M(U,\R^p)$.\end{defn}

\begin{thm}[see \cite{BM}*{Thm.\,4.7}]\label{thm:composition} Let $U \subseteq \R$ be open and suppose that $\gamma \in \CC^M(U,\R^p)$ and $f \in \CC^M(S,\F)$, where $S$ is an open or closed subset of $\R^p$ containing $\im(\gamma)$. Then the composite function $f \circ \gamma \in \CC^M(U,\F)$.\end{thm}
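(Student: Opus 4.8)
The plan is to estimate the derivatives of $f\circ\gamma$ directly via the Fa\`a di Bruno formula, absorbing the resulting combinatorial sum using the logarithmic convexity of $M$. Since membership in $\CC^M$ is a local property, it suffices to fix $t_0\in U$ and to produce constants $A',B'>0$ and an open neighbourhood $W\subseteq U$ of $t_0$ on which $f\circ\gamma$ satisfies the bounds \eqref{eq:DCdef}. Since $\gamma(t_0)\in\im(\gamma)\subseteq S$ and $f\in\CC^M(S,\F)$, there exist $A,B>0$ and a neighbourhood $V$ of $\gamma(t_0)$ with $|D^\beta f(y)|\le AB^{|\beta|}|\beta|!\,M_{|\beta|}$ for every multi-index $\beta$ and every $y\in V\cap S$. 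By continuity of $\gamma$ we may shrink $W$ so that $\gamma(W)\subseteq V$, and shrink it further so that each component satisfies $|\gamma_j^{(m)}(t)|\le \tilde A\,\tilde B^{m}\,m!\,M_m$ for all $t\in W$, all $1\le j\le p$ and all $m\ge 1$; both are possible since $\gamma\in\CC^M(U,\R^p)$.

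On $W$, the multivariate Fa\`a di Bruno formula writes $(f\circ\gamma)^{(n)}(t)$ as a sum, over all partitions $\pi$ of $\{1,\dots,n\}$ into nonempty blocks and over all assignments $\phi$ of an index in $\{1,\dots,p\}$ to each block of $\pi$, of terms $(D^{\beta}f)(\gamma(t))\prod_{J\in\pi}\gamma_{\phi(J)}^{(|J|)}(t)$, where $\beta=\beta(\phi)$ is a multi-index with $|\beta|=|\pi|$; this identity is proved by an easy induction on $n$ from the chain and product rules. There are $p^{|\pi|}\le p^{n}$ such $\phi$, and the bound on $D^{\beta}f$ depends only on $|\beta|=|\pi|$, so substituting the estimates of the previous paragraph reduces the problem to bounding
\[\sum_{\pi}\,p^{n}\,AB^{|\pi|}\,|\pi|!\,M_{|\pi|}\prod_{J\in\pi}\bigl(\tilde A\,\tilde B^{|J|}\,|J|!\,M_{|J|}\bigr).\]

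The crux is the factor $M_{|\pi|}\prod_{J\in\pi}M_{|J|}$. Put $k=|\pi|$ and let $\lambda_1,\dots,\lambda_k\ge1$ be the block sizes, so $\lambda_1+\cdots+\lambda_k=n$. First I would prove the elementary inequality $M_aM_b\le M_1\,M_{a+b-1}$ for all integers $a,b\ge1$: writing $M_a/M_1$ and $M_{a+b-1}/M_b$ each as a product of $a-1$ consecutive ratios of the form $M_{j+1}/M_j$ and comparing termwise, it follows because these ratios are non-decreasing — this is the essential use of logarithmic convexity. Applying it $k$ times in succession (merging $M_{|\pi|}$ with $M_{\lambda_1}$, then the result with $M_{\lambda_2}$, and so on) gives $M_{|\pi|}\prod_{J\in\pi}M_{|J|}\le M_1^{k}\,M_n\le M_1^{n}\,M_n$, using $M_1\ge M_0=1$ and $k\le n$. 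The remaining powers that occur — $B^{|\pi|}$, $\tilde A^{|\pi|}$, $\prod_{J\in\pi}\tilde B^{|J|}=\tilde B^{n}$, $M_1^{n}$ and $p^{n}$ — are each at most $C^{n}$ for a suitable constant $C$ (using $|\pi|\le n$), so the displayed sum is at most $A\,C^{n}\,M_n\sum_{\pi}|\pi|!\prod_{J\in\pi}|J|!$. Finally $\sum_{\pi}|\pi|!\prod_{J\in\pi}|J|!=n!\,2^{n-1}$, as one sees by grouping set partitions by shape: the contribution of the partitions of shape $\lambda$ equals $n!$ times the number of orderings of $\lambda$, and summing over $\lambda\vdash n$ gives $n!$ times the number of compositions of $n$; at any rate this sum is $\le 2^{n}n!$. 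Hence $|(f\circ\gamma)^{(n)}(t)|\le A\,(2C)^{n}\,n!\,M_n$ for $t\in W$ and all $n\ge1$, which is of the required form (the case $n=0$ being trivial, or absorbed by Remark~\ref{rk:finitelmanydonotmatter}); since $t_0$ was arbitrary, $f\circ\gamma\in\CC^M(U,\F)$.

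I expect the main obstacle to be precisely the estimate of $M_{|\pi|}\prod_{J\in\pi}M_{|J|}$: one must use the full strength of logarithmic convexity — mere monotonicity of $M$ would yield only something like $M_n^2$ or $M_{n+1}$, neither of which closes the estimate — in order to keep this product within a geometric factor of $M_n$, and it is exactly this that makes the theorem true without any hypothesis that $\CC^M$ be closed under differentiation. One must also verify that the partition sum contributes only $O(C^{n}n!)$ rather than something super-factorial. The remaining ingredients — the Fa\`a di Bruno identity, the localization, and the bookkeeping of constants — are routine.
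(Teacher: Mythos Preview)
The paper does not give its own proof of this theorem; it is stated with a citation to \cite{BM}*{Thm.\,4.7} and used as a black box. So there is nothing in the present paper to compare your argument against.

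That said, your proof is correct and is essentially the standard one (and indeed the one in \cite{BM}): Fa\`a di Bruno to express $(f\circ\gamma)^{(n)}$, logarithmic convexity to control the weight product, and an explicit bound on the partition sum. Your key inequality $M_aM_b\le M_1M_{a+b-1}$ is verified correctly, and iterating it does give $M_{|\pi|}\prod_{J\in\pi}M_{|J|}\le M_1^{|\pi|}M_n$. The identity $\sum_{\pi}|\pi|!\prod_{J\in\pi}|J|!=n!\,2^{n-1}$ is also right: grouping by shape $\lambda\vdash n$, the contribution of each shape is $n!$ times the number of distinct orderings of $\lambda$, and summing over shapes yields $n!$ times the number of compositions of $n$. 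One small point worth making explicit in the closed-set case: the bounds on $D^\beta f$ are only guaranteed on $V\cap S$, so you should note that $\gamma(W)\subseteq V\cap\im(\gamma)\subseteq V\cap S$, which is automatic since $\im(\gamma)\subseteq S$; this is implicit in your write-up but a referee might want it said.
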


\begin{defn}A class $\CC$ of smooth functions is called \emph{quasianalytic} if whenever $U\subseteq\R^p$ is open and $f \in \CC(U,\F)$ satisfies $D^\alpha f(x) = 0$ for all $\alpha \in \N^p$ and some $x \in U$, then $f$ is identically 0 in a neigbourhood of $x_0$.\end{defn}
The Denjoy-Carleman theorem (\cite[Thm.\,1.3.8]{Hor}; also \cite[Thm.\,2]{Thilliez}) characterizes Denjoy-Carleman classes which are quasianalytic.
\begin{thm}[Denjoy-Carleman]
A Denjoy-Carleman class $\CC^M$ is quasianalytic if and only if
\[\sum_{n = 0}^\infty \frac{M_n}{(n+1)M_{n+1}} = \infty.\]
\end{thm}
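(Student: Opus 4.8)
Since this is the classical Denjoy--Carleman theorem, the plan is to run the standard two-sided argument; I will only indicate its structure. First I would pass to the sequence $\widetilde M_n := n!\,M_n$ that actually governs the bound \eqref{eq:DCdef}: it is again logarithmically convex, the ratios $q_n := \widetilde M_n/\widetilde M_{n-1} = nM_n/M_{n-1}$ are non-decreasing, and $q_n \ge n \to \infty$. With the associated function $T(r) := \sup_{n\ge 0} r^n/\widetilde M_n$ (finite by log-convexity, and growing faster than any power of $r$) and the counting function $\nu(r) := \#\{n\ge 1 \: q_n \le r\}$, an elementary computation gives $\log T(r) = \int_0^r \frac{\nu(t)}{t}\,dt$, so that, after reindexing and Fubini,
\[\sum_{n\ge 0}\frac{M_n}{(n+1)M_{n+1}} = \sum_{n\ge 1}\frac{\widetilde M_{n-1}}{\widetilde M_n} = \sum_{n\ge 1}\frac1{q_n} = \int_1^\infty \frac{\log T(r)}{r^2}\,dr .\]
It therefore suffices to prove that $\CC^M$ is quasianalytic if and only if $\int_1^\infty \log T(r)\,r^{-2}\,dr = \infty$.

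For ``the integral diverges $\Rightarrow$ $\CC^M$ quasianalytic'' I would argue by contraposition. If $\CC^M$ is not quasianalytic there is $f \in \CC^M(U,\F)$, $U \subseteq \R^p$ open, flat at some $x_0$ but not identically zero near $x_0$; restricting $f$ to a line through $x_0$ --- a composition with an analytic, hence $\CC^M$, curve (Theorem~\ref{thm:composition}) --- reduces us to $p=1$. Extending $f$ by $0$ on one side of the flat point (legitimate since $f$ is flat there, after reflecting if necessary) and then replacing it by $x\mapsto f(x)f(c-x)$ for a suitable $c$ produces a nonzero, compactly supported $h \in \CC^M(\R,\C)$. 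Its Fourier transform $\widehat h$ is then entire of finite exponential type (Paley--Wiener), not identically zero, bounded on $\R$, and, since $\widehat{h^{(k)}}(\xi) = (i\xi)^k\widehat h(\xi)$ and $|\widehat{h^{(k)}}(\xi)| \le \|h^{(k)}\|_{L^1} \le C\,B^k\,\widetilde M_k$, one gets $|\widehat h(\xi)| \le C\,B^k\,\widetilde M_k\,|\xi|^{-k}$ for every $k$, whence $|\widehat h(\xi)| \le C/T(|\xi|/B)$ on $\R$. A nonzero entire function of finite exponential type whose positive logarithm is bounded on $\R$ (hence, a fortiori, in the Cartwright class) satisfies $\int_{\R} \log|\widehat h(\xi)|\,(1+\xi^2)^{-1}\,d\xi > -\infty$; combined with the bound on $\widehat h$ this forces $\int_1^\infty \log T(r)\,r^{-2}\,dr < \infty$. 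Alternatively, one can replace this complex-analytic input with Bang's real-variable estimate built on the counting function $\nu$, at comparable cost.

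For the converse, ``the integral converges $\Rightarrow$ $\CC^M$ not quasianalytic'', I would exhibit the witness explicitly. For $a>0$ put $\chi_a := \frac1{2a}\mathbf{1}_{[-a,a]}$, so $\chi_a'$ is the combination of Dirac masses $\frac1{2a}(\delta_{-a}-\delta_a)$, of total variation $1/a$, and choose positive $a_j$ with $\prod_{j=1}^{k+1} a_j = c^{k+1}/\widetilde M_k$ for all $k\ge 0$ --- for instance $a_1 = c$, $a_j = c\,\widetilde M_{j-2}/\widetilde M_{j-1}$ for $j\ge 2$, with any fixed $c>0$; then $\sum_j a_j =: L < \infty$ precisely because the integral converges. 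Let $h$ be the $L^2$-limit of the partial convolutions $\chi_{a_1}*\cdots*\chi_{a_N}$; it is not identically zero (its integral is $1$) and, each partial convolution being supported in $[-L,L]$, so is $h$. Distributing one derivative onto each of the first $k$ factors,
\begin{align*}
\|h^{(k)}\|_\infty
&\le \|\chi_{a_1}'*\cdots*\chi_{a_k}'\|_{\mathrm{TV}}\cdot\|\chi_{a_{k+1}}*\chi_{a_{k+2}}*\cdots\|_\infty \\
&\le \frac1{a_1\cdots a_k}\cdot\frac1{2a_{k+1}} = \frac{\widetilde M_k}{2c^{k+1}} = A\,B^k\,k!\,M_k
\end{align*}
with $A = 1/(2c)$, $B = 1/c$; hence $h \in \CC^\infty(\R,\R)$ and in fact $h \in \CC^M(\R,\R)$. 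A smooth, compactly supported, not-identically-zero function is flat at the right endpoint of its support but not identically zero near that endpoint, so $\CC^M$ is not quasianalytic.

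The one genuinely non-elementary ingredient --- and hence the main obstacle --- is the log-integrability of Cartwright-class functions used in the second paragraph (equivalently, Bang's inequality). Everything else --- the reduction to one variable, the bump-function trick, the identification of the three forms of the Carleman sum, and the convolution estimate --- is routine bookkeeping with logarithmically convex sequences.
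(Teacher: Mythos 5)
The paper does not prove this classical theorem at all---it is quoted with citations to H\"ormander and Thilliez---and your sketch reproduces exactly the standard argument of the cited source: Paley--Wiener plus log-integrability of Cartwright-class functions (equivalently Bang's estimate) for ``divergence implies quasianalyticity,'' and the Bang/H\"ormander convolution of characteristic functions $\chi_{a_1}*\chi_{a_2}*\cdots$ for the converse, so it is correct in outline and follows the same route as the paper's reference. The only caveat worth noting is that your displayed equality between $\sum_n M_n/((n+1)M_{n+1})$ and $\int_1^\infty \log T(r)\,r^{-2}\,dr$ is literally an equality only up to harmless boundary terms coming from the partial integration of the counting function $\nu$; the two quantities diverge or converge together, which is all your argument uses.
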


\section{A function in a given Denjoy-Carleman class which is nowhere in any smaller class}
The example we construct here is based on the idea Borel used in \cite{Borel} to construct a class of quasianalytic functions that contains nowhere analytic functions. The example constructed here was inspired by, and uses several ideas in the construction of the non-extendable function of \cite[Thm.\,1.2]{ACQ}. The idea will be to construct the function as the restriction to $(-1,1)$ of a series of rational functions
\[\sum_{n=1}^\infty \frac{A_n}{z-z_n}.\]
where $z_n$ is a sequence of non-real complex numbers accumulating everywhere $(-1,1)$. Theorem~\ref{thm:nosmallerclass} will be proved using the following proposition:
\begin{prop}\label{prop:nosmallerclassbuild} For any non-analytic Denjoy-Carleman class $\CC^M$, there exists $f \in \CC^\infty((-1,1))$ satisfying\emph{:}
\begin{enumerate}[label = \emph{(\arabic*)}]
\item \label{sat:nosmallerclassbuild:upper}for all $j \geq 0$ and $x \in (-1,1)$, $|f^{(j)}(x)| \leq \frac{9}{2}j!M_j$\emph{;}
\item \label{sat:nosmallerclassbuild:lower}for any dyadic rational $x \in (-1,1)$, and large enough $j$, \[|f^{(j)}(x)| \geq \recip{2}\recip{3^j}j!M_j.\]
\item \label{sat:nosmallerclassbuild:realim}for any dyadic rational $x \in (-1,1)$ and large enough $j$, either \[|\Re(f)^{(j)}(x)| \leq \recip{3}|\Im(f)^{(j)}(x)| \text { or } |\Im(f)^{(j)}(x)| \leq \recip{3}|\Re(f)^{(j)}(x)|.\]
\end{enumerate}
\end{prop}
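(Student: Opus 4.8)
The plan is to realize $f$ as the restriction to $(-1,1)$ of a series $\sum_{n=1}^\infty A_n/(z - z_n)$, where the poles $z_n$ are non-real complex numbers chosen to accumulate at every dyadic rational in $(-1,1)$, and the coefficients $A_n$ are chosen small enough to guarantee convergence of all derivatives (yielding \ref{sat:nosmallerclassbuild:upper}) but large enough that, near each dyadic rational, a single dominant term forces the lower bound \ref{sat:nosmallerclassbuild:lower}. First I would fix an enumeration $(x_n)$ of the dyadic rationals in $(-1,1)$ (with each appearing infinitely often) and set $z_n = x_n + i\delta_n$ for a rapidly decreasing sequence $\delta_n > 0$. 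Differentiating termwise gives $f^{(j)}(x) = \sum_n (-1)^j j!\, A_n (x - z_n)^{-(j+1)}$, so $|f^{(j)}(x)| \le j! \sum_n |A_n| \, |x - z_n|^{-(j+1)}$ and $|x - z_n|^{-(j+1)} \le \delta_n^{-(j+1)}$. The non-analyticity hypothesis means $M_j^{1/j} \to \infty$, i.e. $M_j$ grows faster than any geometric sequence; this is exactly what makes it possible to choose $\delta_n \downarrow 0$ slowly enough that $\sum_n |A_n| \delta_n^{-(j+1)} \lesssim M_j$ for a fixed choice of $A_n$ — concretely one wants $|A_n|\delta_n^{-(j+1)}$ comparable to $2^{-n} j! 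M_j$-ish after summing. I would make these choices recursively: having committed to $z_1,\dots,z_{n-1}$ and $A_1,\dots,A_{n-1}$, pick $\delta_n$ small (depending on the already-chosen data so the new pole is ``far'' in the relevant sense from the earlier ones, and so its own contribution at points other than near $x_n$ is negligible) and then pick $A_n$ in terms of $\delta_n$, $M$, and $n$.

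For the upper bound \ref{sat:nosmallerclassbuild:upper} I would bound the tail: choosing $|A_n| \le c_n \delta_n^{\,n+1}$ for a summable sequence $c_n$ forces $\sum_n |A_n|\delta_n^{-(j+1)} \le \sum_{n \le j} |A_n|\delta_n^{-(j+1)} + \sum_{n > j} c_n$, and with $\delta_n$ chosen so small that $|A_n| \delta_n^{-(j+1)} \le 2^{-n} M_j$ for all $j < n$ (possible since for fixed $n$ this is finitely many inequalities, and $M_j \ge M_0 = 1$), the whole sum is controlled by a geometric series, giving a constant like $9/2$ after absorbing the $j!$. The key point is that for each fixed $n$ only finitely many constraints ``$|A_n|\delta_n^{-(j+1)} \le 2^{-n}M_j$'' with $j < n$ need to be met, while for $j \ge n$ the factor $\delta_n^{\,n+1}\delta_n^{-(j+1)} = \delta_n^{-(j-n)} \ge 1$ is harmless once multiplied against the summable $c_n$ — but here I must be careful, and it is cleaner to arrange instead that $|A_n| \le 2^{-n} \delta_n^{\,j+1} M_j / j!$ type bounds hold for all relevant $j$ simultaneously by exploiting $M_j^{1/j}\to\infty$; getting the two-sided control to line up is the technical heart.

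For \ref{sat:nosmallerclassbuild:lower}: given a dyadic rational $x$, it equals $x_n$ for infinitely many $n$, and for such $n$ the term $A_n/(x - z_n)$ has $|x - z_n| = \delta_n$, so its $j$-th derivative at $x$ has modulus $j!\,|A_n|\delta_n^{-(j+1)}$. If I arrange the recursion so that $|A_n|\delta_n^{-(j+1)} \ge \tfrac12 \cdot 3^{-j} M_j$ for all $j$ in a suitable range (again using that $M_j$ does not grow too fast relative to how I chose $\delta_n$), and so that the contribution of all other poles at the point $x$ is an order of magnitude smaller — say $\le \tfrac13$ of the dominant term's modulus, which can be forced by making $\delta_m$ for $m \ne n$ with $x_m$ near $x$ much smaller or much larger than $\delta_n$, and making $A_m$ tiny — then the reverse triangle inequality yields \ref{sat:nosmallerclassbuild:lower}. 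Finally, \ref{sat:nosmallerclassbuild:realim} comes almost for free from the same domination: near $x = x_n$ the function looks like $A_n/(z - z_n)$ plus a small error, and $\Re$ and $\Im$ of the derivatives of $A_n(z-z_n)^{-(j+1)}$ are $\Re$ and $\Im$ of $(-1)^j j! A_n \delta_n^{-(j+1)} e^{-i\pi(j+1)/2}\cdot(\text{unit})$ — more precisely of a fixed complex number rotating by $90^\circ$ each time $j$ increments — so for each $j$ at least one of $|\Re|, |\Im|$ of the dominant term is within a factor $\tan(\pi/8)$-ish of $0$ while the other is large, and the small error from the other poles cannot spoil this if it is taken below the $\tfrac13$ threshold; I would choose $A_n$ to have argument a multiple of $\pi/4$ (or just generic) to make the trichotomy clean. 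The main obstacle is the simultaneous bookkeeping of the recursive choices of $\delta_n$ and $A_n$ so that the one fixed sequence $(A_n)$ delivers the upper bound with the universal constant $9/2$, the lower bound $\tfrac12 3^{-j} j! M_j$ at every dyadic point for all large $j$, and the real/imaginary separation — i.e. ensuring the ``dominant term'' really dominates uniformly, which forces $\delta_m$ to be separated in scale across all $m$ with nearby $x_m$.
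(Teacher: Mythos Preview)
Your overall plan---realize $f$ as $\sum A_n/(z-z_n)$ with non-real poles accumulating at every dyadic rational---is the same as the paper's, and you correctly identify the tension between the upper and lower bounds. But the implementation you sketch has a genuine gap, and it is precisely the one the paper's construction is designed to close.

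The problem is in the lower bound. You want, at each dyadic $x$ and for each large $j$, a single ``dominant'' pole $z_n=x+i\delta_n$ above $x$ whose contribution $|A_n|\delta_n^{-(j+1)}$ swamps the rest. You propose to make the other poles negligible by separating the $\delta_m$ in scale. This works for the other poles \emph{directly above} $x$, but not for poles above a \emph{nearby} dyadic $x'$. If $|x-x'|<\delta_m$, then the pole at $x'+i\delta_m$ sits at distance $\approx\delta_m$ from $x$, so its $j$-th derivative contribution at $x$ has the same order $|A_m|\delta_m^{-(j+1)}$ as its contribution at $x'$. There is no way to make this small without also killing the lower bound at $x'$. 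A recursive ``one pole at a time'' scheme cannot coordinate this: for every dyadic and every $j$ you need a dominant pole, and dyadics are dense, so nearby dyadics will always have poles at comparable heights that interfere.

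The paper resolves this by abandoning the single-dominant-pole picture and instead laying the poles in \emph{rows}: at stage $k$ one places poles at height $1/m_k$ (where $m_k=M_{k+1}/M_k$) above \emph{every} dyadic of denominator $b_k$, all with the same coefficient $3^{-k}/\phi(m_k)$, where $\phi(\alpha)=\sup_\ell \alpha^{\ell+1}/M_\ell$. The crucial identity is $M_j=m_j^{j+1}/\phi(m_j)$, which makes the on-axis sum $S_{1,j}=\sum_k 3^{-k}m_k^{j+1}/\phi(m_k)$ at least $3^{-j}M_j$ by taking just the $k=j$ term. Because the horizontal spacing $1/b_k$ is at least the height $1/m_k$, the off-axis poles in row $k$ contribute at most $\sum_{n\neq0}(n^2+1)^{-(j+1)/2}$ times the on-axis pole of that row; summing over $k$ gives $|S_{2,j}|\le C_j|S_{1,j}|$ with $C_j\to0$, a purely geometric bound independent of $M$. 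Your scheme has no analogue of this row-by-row comparison, and your acknowledged uncertainty in the upper bound (``here I must be careful'') is a symptom of not having isolated $\phi$ and the identity $M_j=m_j^{j+1}/\phi(m_j)$, which is what makes the two-sided control line up with specific constants.
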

First, we will prove Theorem~\ref{thm:nosmallerclass} using Proposition~\ref{prop:nosmallerclassbuild}.
\begin{proof}[Proof of Theorem~\ref{thm:nosmallerclass}]
We assume that $\CC^M \supsetneq \CC^\omega$, since $\CC^\omega$ is the smallest Denjoy-Carleman class. We first prove the case $\F = \C$. Let $f$ be the function of Proposition~\ref{prop:nosmallerclassbuild} for the class $\CC^M$. Theorem~\ref{thm:nosmallerclass}\satref{sat:nosmallerclass:CM}. To prove Theorem~\ref{thm:nosmallerclass}\satref{sat:nosmallerclass:notCN}, note that if $U \subseteq (-1,1)$ is open, and $f \in \CC^N(U)$, then, for any $x \in U$, there is some open neighbourhood $V$ of $x$ contained in $U$ and constants $A,B>0$ such that
\[|f^{(j)}(x)| \leq AB^jj!N_j.\]
In particular, if $x$ is a dyadic rational in $V$, then, for all but finitely many $j$,
\[\recip{2}\recip{3^j}j!M_j \leq |f^{(j)}(x)| \leq AB^jj!N_j\]
which then implies that $\CC^M(U) \subseteq \CC^N(U)$.

Now consider $\F = \R$, and let $f$ be as above. For each dyadic rational $x \in (-1,1)$, and each $j$ large enough, either 
\[|\Re(f)^{(j)}(x)| \geq \recip{4}\recip{3^j}j!M_j \text{ or }|\Im(f)^{(j)}(x)| \geq \recip{4}\recip{3^j}j!M_j.\]
Set $g := \Re(f) + \Im(f)$. We show that $g$ satisfies the required properties. Clearly $g$ satisfies Theorem~\ref{thm:nosmallerclass}\satref{sat:nosmallerclass:CM}. For each dyadic rational in $x \in (-1,1)$ and for $j$ large enough, either
\[|g^{(j)}(x)| \geq |\Re(f)^{(j)}(x)| - |\Im(f)^{(j)}(x)| \geq \frac{2}{3}|\Re(f)^{(j)}(x)| = \recip{6}\recip{3^j}j!M_j\]
or
\[|g^{(j)}(x)| \geq |\Im(f)^{(j)}(x)| - |\Re(f)^{(j)}(x)| \geq \frac{2}{3}|\Im(f)^{(j)}(x)| = \recip{6}\recip{3^j}j!M_j.\]
So $g$ satisfies Theorem~\ref{thm:nosmallerclass}\satref{sat:nosmallerclass:notCN} for the same reason above as $f$ does.\end{proof}

Now we will prove Proposition~\ref{prop:nosmallerclassbuild}.
\begin{proof}[Proof of Proposition~\ref{prop:nosmallerclassbuild}]\phantomsection\label{pf:nosmallerclassbuild}
For any real number $\alpha > 0$, define
\[\phi(\alpha) := \sup_{\ell \geq 0} \frac{\alpha^{\ell+1}}{M_\ell} \text{  and  } m_n := \frac{M_{n+1}}{M_n}.\]
Recall that we are assuming that the sequence $M$ is logarithmically convex, i.e. the sequence $m_n$ is non-decreasing.
Since $\CC^M((-1,1)) \neq \CC^\omega((-1,1))$, $\phi(\alpha) < \infty$, for all $\alpha$. Furthermore,
\begin{equation}\label{eq:inACQ}M_n = \frac{m_n^{n+1}}{\phi(m_n)}\ .\end{equation}
A proof of \eqref{eq:inACQ} can be found in \cite[\S5\,, step 1]{ACQ}, but is repeated here for convenience.

By definition, it is required to prove that
\[\frac{m_n^{n+1}}{M_n} = \sup_{\ell \geq 0} \frac{m_n^{\ell+1}}{M_\ell}.\]
Indeed, if $\ell < n$, then
\[\frac{m_n^{\ell+1}}{M_\ell} \leq \frac{m_n^{\ell+1}}{M_\ell}\frac{m_n}{m_\ell} = \frac{m_n^{\ell+2}}{M_{\ell+1}}\]
and if $\ell > n$, then
\[\frac{m_n^{\ell+1}}{M_\ell} = \frac{m_n^{\ell}}{M_{\ell-1}}\frac{m_n}{m_\ell} \leq \frac{m_n^{\ell}}{M_{\ell-1}}.\]
The sequence $\frac{m_n^{\ell+1}}{M_\ell}$ is therefore non-decreasing for $\ell < n$ and non-increasing for $\ell > n$, and thus attains its supremum at $\frac{m_n^{n+1}}{M_n}$.

Now choose a non-decreasing sequence of integers $b_n$ satisfying:
\begin{enumerate}[label=(\roman*)]
\item $b_n \leq \min(m_n,2^n)$, for all $n$;
\item for all $n$, there is an integer $k_n$ such that $b_n = 2^{k_n}$
\item for all $k$, there is an integer $n_k$ such that $b_{n_k} = 2^k$;
\item $b_1 = 1$.
\end{enumerate}
For example, we can define the sequence $(b_n)$ recursively by $b_1 = 1$, and for all $n \geq 1$,
\[b_{n+1} := \begin{cases}
        b_n & \text{ if } 2b_n > m_{n+1} \\
	2b_n & \text{ if } 2b_n \leq m_{n+1}
        \end{cases}
\]
Then define $f$ by
\begin{equation}\label{eq:firstfunc}f(x) := \sum_{k=1}^\infty \recip{3^k\phi(m_k)} \sum_{a = -b_k}^{b_k} \recip{\left(x-\left(\frac{a}{b_k}+\frac{i}{m_k}\right)\right)}.\end{equation}
It helpful to picture the poles on the complex plane in \eqref{eq:firstfunc} both as coming in rows of height $\recip{m_k}$ and as columns lying above dyadic rationals in $(-1,1)$.

We will verify that $f$ satisfies the required properties.

First we will prove that \eqref{eq:firstfunc} converges uniformly on $(-1,1)$ together with its derivatives of every order. Then, $f \in \CC^\infty((-1,1))$ and we can differentiate \eqref{eq:firstfunc} term-by-term. Note that for any $s,t \in \R,$ $|s-it| \geq |t|$, and that, by the definition of $\phi$,
\[\frac{m_k^{j+1}}{\phi(m_k)} \leq M_j,\text{ for all } k,j.\]
We have the following estimates on the $j^\text{th}$ derivative of a general term in \eqref{eq:firstfunc}:
\begin{align*}
&\qquad \left|\left(\sum_{a = -b_k}^{b_k} \recip{\left(x-\left(\frac{a}{b_k}+\frac{i}{m_k}\right)\right)}\right)^{\hspace{-1ex}(j)}\right|\\
&= j!\left|\recip{3^k\phi(m_k)} \sum_{a = -b_k}^{b_k} \recip{\left(x-\left(\frac{a}{b_k}+\frac{i}{m_k}\right)\right)^{j+1}}\right|\\
&\leq j!\recip{3^k\phi(m_k)} \sum_{a = -b_k}^{b_k}\recip{\left|\frac{i}{m_k}\right|^{j+1}}\\
&= j! \frac{m_k^{j+1}}{3^k\phi(m_k)}(2b_k+1)\\
&\leq j!M_j \frac{2b_k+1}{3^k} \leq M_jj!\frac{2^{k+1}+1}{3^k} = \frac{9}{2}j!M_j.
\end{align*}
Since
\[\sum_{k=1}^\infty M_jj!\frac{2^{k+1}+1}{3^k} = \frac{9}{2}j!M_j,\]
the series in \eqref{eq:firstfunc} converges absolutely and uniformly on $(-1,1)$ by the M-test.

Differentiating term-by-term, the above computation gives the upper bounds \satref{sat:nosmallerclassbuild:upper} on the derivatives of $f$.

We prove the lower bounds \satref{sat:nosmallerclassbuild:lower} on the derivatives of $f$ at dyadic rationals, at the same time as \satref{sat:nosmallerclassbuild:realim}. The idea is, for any given dyadic rational $t = \frac{p}{2^q} \in (-1,1)$, to look at those summands in \eqref{eq:firstfunc} which have poles on vertical lines lying above $t$. Since by construction there are only finitely many rows of poles not containing a pole lying above $t$, the sum of these summands is analytic when restricted to $(-1,1)$, and thus will not affect the estimate. For the remaining rows, the sum over the $j^{\text{th}}$ derivatives of summands with poles \emph{not} lying above $t$ is a multiple of the sum over the $j^{\text{th}}$ derivatives of the summands with poles \emph{which do} lie above $t$, and this multiple can be made arbitrarily small for large $j$. So, as long as the sum of the $j^{\text{th}}$ derivatives of the summands with poles lying above $t$ is large, the $j^{\text{th}}$ derivative of $f$ at $t$ will be large too.

To show this explicitly, fix some dyadic rational $t = \frac{p}{2^q} \in (-1,1)$. Then, for some large $K = K_t$, $b_k \geq 2^q$ for all $k \geq K$. Thus, we can write
\begin{align*}
f(x) =  &\sum_{k<K} \recip{3^k\phi(m_k)} \sum_{a = -b_k}^{b_k} \recip{\left(x-\left(\frac{a}{b_k}+\frac{i}{m_k}\right)\right)} \\
+&\sum_{k\geq K} \recip{3^k\phi(m_k)} \sum_{a = -b_k}^{b_k} \recip{\left(x-\left(\frac{a}{b_k}+\frac{i}{m_k}\right)\right)}.
\end{align*}
Call the first sum $f_1(x)$ and the second sum $f_2(x)$. $f_1$ is clearly holomorphic in an open neighbourhood of $(-1,1)$ in $\C$, and is thus in particular analytic on $(-1,1)$. So, there are $E,F > 0$ such that $|f_1^{(j)}(t)| \leq EF^jj!$, for all $j \geq 0$. Since we can differentiate the series for $f(x)$ term-by-term, we can also differentiate the series for $f_2(x)$ term-by-term. In particular,
\begin{align*}
&\quad \ f_2^{(j)}(t)/j! = \sum_{k\geq K}^\infty \frac{(-1)^j}{3^k\phi(m_k)} \sum_{a = -b_k}^{b_k} \recip{\left(t-\left(\frac{a}{b_k}+\frac{i}{m_k}\right)\right)^{j+1}}\\
&= \sum_{k\geq K} \frac{-1}{3^k\phi(m_k)} \recip{\left(\frac{i}{m_k}\right)^{j+1}} + \sum_{k\geq K} \frac{(-1)^j}{3^k\phi(m_k)} \sum_{\substack{-b_k \leq a \leq b_k \\ a/b_k \neq t}} \recip{\left(t-\left(\frac{a}{b_k}+\frac{i}{m_k}\right)\right)^{j+1}}.
\end{align*}

Call the first of these sums $S_{1,j}$, and the second $S_{2,j}$. Clearly, for $j \geq K$,
\begin{align*}
|S_{1,j}| = \sum_{k\geq K} \frac{m_k^{j+1}}{\phi(m_k)3^k} \geq \recip{3^j}\frac{m_j^{j+1}}{\phi(m_j)} = \recip{3^j}M_j,
\end{align*}
by \eqref{eq:inACQ}. If $j$ is odd, then $|\Re(S_{1,j})| = |S_{1,j}|$, and $|\Im(S_{1,j})| = 0$, with the roles of the real and imaginary parts reversed if $j$ is even.

Remembering that $b_k \leq m_k$ for all $k$, and that $b_k$ is a power of $2$ bigger than $2^q$ for all $k \geq K$ (and hence $tb_k-a \in \Z$ for all $a \in \Z$) we also have that
\begin{align*}
|S_{2,j}| &\leq \sum_{k\geq K} \recip{3^k\phi(m_k)} \sum_{\substack{-b_k \leq a \leq b_k \\ a/b_k \neq t}} \recip{\left|\left(t-\frac{a}{b_k}\right)-\left(\frac{i}{m_k}\right)\right|^{j+1}}\\
&= \sum_{k\geq K} \recip{3^k\phi(m_k)} \sum_{\substack{-b_k \leq a \leq b_k \\ a/b_k \neq t}} \frac{m_k^{j+1}}{\left(\frac{m_k^2}{b_k^2}(tb_k-a))^2 + 1\right)^{\frac{j+1}{2}}}\\
&\leq \sum_{k\geq K} \frac{m_k^{j+1}}{3^k\phi(m_k)}\sum_{\substack{-\infty < n < \infty \\ n \neq 0}} \frac{1}{\left(n^2 + 1\right)^{\frac{j+1}{2}}}\\
&= \left(\sum_{\substack{-\infty < n < \infty \\ n \neq 0}} \frac{1}{\left(n^2 + 1\right)^{\frac{j+1}{2}}}\right)\left(\sum_{k\geq K} \frac{m_k^{j+1}}{3^k\phi(m_k)}\right).
\end{align*}
The second factor is just $|S_{1,j}|$. Call the first factor $C_{j}$. Then, for $j \geq K$ odd,
\begin{align*}
|\Re(f)^{(j)}(t)/j!| &\geq |\Re(S_{1,j})|-|\Re(S_{2,j})|-|\Re(f_1)^{(j)}(t)/j!|\\
&\geq |S_{1,j}|-|S_{2,j}|-|f_1^{(j)}(t)/j!| \geq (1-C_{j})|S_{1,j}|-EF^j
\end{align*}
and
\begin{align*}
|\Im(f)^{(j)}(t)/j!| &\leq|\Im(S_{1,j})|+|\Im(S_{2,j})|+|\Im(f_1^{(j)})(t)/j!|\\
&\leq |S_{2,j}|+|f_1^{(j)}(t)/j!| \leq C_j|S_{1,j}|+EF^j.
\end{align*}
with the roles of the real and imaginary parts reversed if $j \geq K$ is even.

Since for large enough $j$, $EF^j < \frac{3}{32}\recip{3^j}M_j < \recip{8}\recip{3^j}M_j$ (since $M_j$ grows more quickly than any exponential), if for large enough $j$, $C_j < 3/48 < 1/8$, we would have for large odd $j$
\begin{align*}
\recip{3}|\Re(f)^{(j)}(t)/j!|-|\Im(f)^{(j)}(t)/j!| &\geq \recip{3}((1-C_{j})|S_{1,j}|-EF^j)-(C_j|S_{1,j}|+EF^j)\\
&= \left(\recip{3}-\frac{4}{3}C_j\right)|S_{1,j}|-4/3EF^j\\
&\geq \recip{4}\recip{3^j}M_j-4/3EF^j \geq \recip{8}\recip{3^j}M_j > 0,
\end{align*}
so that both
\[|\Im(f)^{(j)}(t)| \leq \recip{3}|\Re(f)^{(j)}(t)|\]
and
\begin{align*}
|f^{(j)}(t)| &\geq |\Re(f)^{(j)}(t)| - |\Im(f)^{(j)}(t)| \geq \frac{2}{3}|\Re(f)^{(j)}(t)| \\
&\geq  \frac{2}{3}j!\left((1-C_j)|S_{1,j}|-EF^j\right) \geq \frac{2}{3}j!\left(\frac{7}{8}\recip{3^j}M_j-\recip{8}\recip{3^j}M_j\right) = \recip{2}\recip{3^j}j!M_j.\end{align*}
If $j$ is even, then the roles of the real part and imaginary part are reversed. So  \satref{sat:nosmallerclassbuild:lower} and \satref{sat:nosmallerclassbuild:realim} would follow provided that $C_{j} \to 0$ as $j \to \infty$. Indeed
\begin{align*}
C_j &= 2\sum_{n=1}^\infty\recip{(n^2+1)^{\frac{j+1}{2}}}\\ 
&\leq 2\left(\recip{\sqrt{2}^{j+1}} + \sum_{n=2}^\infty \recip{n^{j+1}}\right)\\
&\leq 2\left(\recip{\sqrt{2}^{j+1}} + \int_1^\infty \recip{x^{j+1}}\text{d}x\right) \leq 2\left(\recip{\sqrt{2}^{j+1}} + \recip{j}\right) \to 0 \text{ as } j \to \infty,
\end{align*}
as desired.
\end{proof}

\section{A function formally in a given Denjoy-Carleman class at every point, yet not in the class}\label{sec:formalnotCM}
The idea for the construction of such a function will be to build it as a series of functions $f_k$ whose $k^{\text{th}}$ derivatives at points $a_k$ are large, where $(a_k)$ is a sequence tending to $0$, and whose derivatives at points other than $a_k$ are sufficiently nice. The following proposition is in some sense a simplified version of the example constructed in Theorem~\ref{prop:nosmallerclassbuild}, and will provide the building blocks of our example.

\begin{prop}\label{prop:elementarybuildingblocks}For any non-analytic Denjoy-Carleman class $\CC^M$, there exists $f \in \CC^\infty(\R)$ satisfying\emph{:}
\begin{enumerate}[label = \emph{(\arabic*)}]
\item \label{sat:elementarybuildingblocks:CMupper}for all $j \geq 0$, and all $x \in \R$, $|f^{(j)}(x)| \leq j!M_j$\emph{;}
\item \label{sat:elementarybuildingblocks:Analupper}for all $j \geq 0$, and all $x \neq 0$, $|f^{(j)}(x)| \leq j!|x|^{-(j+1)}$\emph{;}
\item \label{sat:elementarybuildingblocks:lower}for all $j \geq 1$, $|f^{(j)}(0)| \geq \recip{2^j}j!M_j$.
\end{enumerate}
\end{prop}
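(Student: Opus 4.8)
The plan is to reuse the rational-function machinery from the proof of Proposition~\ref{prop:nosmallerclassbuild}, but with all the poles stacked in a single column over the origin instead of being spread out over the dyadic rationals. Keep the notation $m_n = M_{n+1}/M_n$ and $\phi(\alpha) = \sup_{\ell\ge 0}\alpha^{\ell+1}/M_\ell$; recall from that proof that $\phi(\alpha)<\infty$ for every $\alpha$ (since $\CC^M$ is non-analytic), that $\phi(m_k)\ge m_k\ge m_1\ge 1$, and that $M_n = m_n^{n+1}/\phi(m_n)$ by \eqref{eq:inACQ}, equivalently $m_k^{j+1}/\phi(m_k)\le M_j$ for all $j,k$. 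Then I would set
\[ f(x) := \sum_{k=1}^\infty \frac{2^{-k}}{\phi(m_k)}\cdot\frac{1}{x - i/m_k}. \]
The $j$th derivative of the $k$th summand is $(-1)^j j!\,2^{-k}\big/\big(\phi(m_k)(x-i/m_k)^{j+1}\big)$, and since $|x-i/m_k|\ge 1/m_k$ for real $x$ its modulus is at most $j!\,2^{-k}m_k^{j+1}/\phi(m_k)\le 2^{-k}j!M_j$, which is summable in $k$. So by the Weierstrass M-test $f\in\CC^\infty(\R)=\CC^\infty(\R,\C)$ and the series may be differentiated term by term.

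Summing the above estimate over $k$ gives $|f^{(j)}(x)|\le j!M_j\sum_k 2^{-k}=j!M_j$, which is \satref{sat:elementarybuildingblocks:CMupper}. For \satref{sat:elementarybuildingblocks:Analupper} I would instead bound the $k$th term using $|x-i/m_k|\ge |x|$ (valid for real $x\neq 0$), so that the $j$th derivative of the $k$th summand has modulus at most $j!\,2^{-k}\big/\big(\phi(m_k)|x|^{j+1}\big)\le 2^{-k}j!|x|^{-(j+1)}$ (here I use $\phi(m_k)\ge 1$); summing over $k$ yields $|f^{(j)}(x)|\le j!|x|^{-(j+1)}$.

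For the lower bound \satref{sat:elementarybuildingblocks:lower}, evaluate the differentiated series at $x=0$: since $(-i/m_k)^{-(j+1)}=m_k^{j+1}i^{j+1}$, the $k$th summand becomes $(-1)^j i^{j+1} j!\cdot 2^{-k}m_k^{j+1}/\phi(m_k)$. The key point is that every term carries the \emph{same} unimodular factor $(-1)^j i^{j+1}$ together with a positive real coefficient, so there is no cancellation and, for $j\ge 1$,
\[ |f^{(j)}(0)| = j!\sum_{k=1}^\infty \frac{2^{-k}m_k^{j+1}}{\phi(m_k)} \ \ge\ j!\cdot\frac{2^{-j}m_j^{j+1}}{\phi(m_j)} \ =\ \frac{1}{2^j}j!M_j, \]
the last equality being \eqref{eq:inACQ}.

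There is no serious obstacle here beyond bookkeeping: the only nontrivial ingredient, the identity \eqref{eq:inACQ} tying together $M_n$, $m_n$ and $\phi$, is already established, and everything else reduces to the M-test and the elementary inequality $|x-i/m_k|\ge\max(|x|,1/m_k)$. The one place that genuinely needs care is the no-cancellation observation in the series for $f^{(j)}(0)$, and it is precisely this that forces the poles to lie along the positive imaginary axis (all of the form $i/m_k$, with matching signs) rather than being distributed over many base points as in Proposition~\ref{prop:nosmallerclassbuild}.
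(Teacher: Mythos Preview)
Your proposal is correct and is essentially identical to the paper's own proof: the same function $f(x)=\sum_{k\ge 1}\bigl(2^k\phi(m_k)(x-i/m_k)\bigr)^{-1}$ is used, and the three properties are verified via the same estimates (the M-test with $|x-i/m_k|\ge 1/m_k$ for \satref{sat:elementarybuildingblocks:CMupper}, the bound $\phi(m_k)\ge 1$ together with $|x-i/m_k|\ge |x|$ for \satref{sat:elementarybuildingblocks:Analupper}, and the no-cancellation observation combined with \eqref{eq:inACQ} for \satref{sat:elementarybuildingblocks:lower}).
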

\begin{proof}
Let $m_n := M_{n+1}/M_n$, and let
\[\phi(\alpha) := \sup_{\ell \geq 0} \frac{\alpha^{\ell+1}}{M_\ell},\]
as in the \hyperref[pf:nosmallerclassbuild]{proof of Proposition}~\ref{prop:nosmallerclassbuild} (recalling again the hypothesis of logarithmic convexity). Define
\begin{equation}\label{eq:secfunc}f(x) = \sum_{k=1}^\infty \recip{2^k\phi(m_k)\left(x-\frac{i}{m_k}\right)}.\end{equation}

We will prove that $f$ satisfies all the required properties.

First we will show that \eqref{eq:secfunc} converges uniformly on $\R$ together with its derivatives of every order. Then, $f \in \CC^\infty(\R)$ and we can differentiate term-by-term. Indeed, we have the following estimates on the $j^\text{th}$ derivative of a general term of the series in \eqref{eq:secfunc}:
\begin{align*}
&\left|\left(\recip{2^k\phi(m_k)\left(x-\frac{i}{m_k}\right)}\right)^{\hspace{-1ex}(j)}\right| =  j!\left|\recip{2^k\phi(m_k)\left(x-\frac{i}{m_k}\right)^{j+1}}\right|\\
&\leq j!\recip{2^k\phi(m_k)\left|\frac{i}{m_k}\right|^{j+1}}
= j!\recip{2^k}\frac{m_k^{j+1}}{\phi(m_k)} \leq j!M_j\recip{2^k}.
\end{align*}
Since
\[\sum_{k=1}^\infty j!M_j\recip{2^k} = j!M_j,\]
The series in \eqref{eq:secfunc} converges absolutely and uniformly on $\R$ by the M-test.

Differentiating term-by-term, the above computation gives the upper bounds \satref{sat:elementarybuildingblocks:CMupper} on the derivatives of $f$.

We next prove \satref{sat:elementarybuildingblocks:Analupper}. Note that for all $k$, $\phi(m_k) \geq 1$. Indeed,
\begin{align*}
\phi(m_k) &= \sup_{\ell \geq 0} \frac{m_k^{\ell+1}}{M_\ell} \geq \frac{m_k^{k+1}}{M_k} \geq \frac{m_km_{k-1}\cdots m_1m_0}{M_k}\\
&= \recip{M_k}\frac{M_{k+1}}{M_k}\frac{M_{k}}{M_{k-1}}\cdots\frac{M_2}{M_1}\frac{M_1}{M_0} = \frac{M_{k+1}}{M_k}\recip{M_0} \geq 1.
\end{align*}
So, for all $x \neq 0$,
\begin{align*}
|f^{(j)}(x)| &= j!\left|\sum_{k=1}^\infty\recip{2^k\phi(m_k)\left(x-\frac{i}{m_k}\right)^{j+1}}\right|\\
&\leq j!\sum_{k=1}^\infty\recip{2^k\phi(m_k)|x|^{j+1}}= |x|^{-(j+1)}j!\sum_{k=1}^\infty\recip{2^k\phi(m_k)} \leq j!|x|^{-(j+1)}.
\end{align*}

To prove the lower bounds \satref{sat:elementarybuildingblocks:lower} on the derivatives at 0, note that for $j \geq 1$,
\begin{align*}
|f^{(j)}(0)| &= j!\left|\sum_{k=1}^\infty\recip{2^k\phi(m_k)\left(-\frac{i}{m_k}\right)^{j+1}}\right| = j!\sum_{k=1}^\infty\frac{m_k^{j+1}}{2^k\phi(m_k)} \geq \recip{2^j}\frac{m_j^{j+1}}{\phi(m_j)}j! = \recip{2^j}j!M_j.
\end{align*}
\end{proof}

The proofs of Theorem~\ref{thm:formaleverywherenot} and Theorem~\ref{thm:masterthm} will be somewhat simplified by introducing \emph{strictly} logarithmically convex weight sequences $M$ for our Denjoy-Carleman classes.
\begin{defn} A sequence $M = (M_n)_{n=0}^\infty$ is called \emph{strictly} logarithmically-convex if the ratios $M_{n+1}/M_n$ form a strictly-increasing sequence.\end{defn} Notice that strict logarithmic convexity also implies that the sequence $M_n^{1/n}$ is strictly increasing.

\begin{lem}\label{lem:strictlogconvex}Let $\mathcal C^M$ denote a non-analytic Denjoy-Carleman class. Then, there exists a non-decreasing strictly-logarithmically convex sequence $\widetilde{M}$ such that $\mathcal C^M = \mathcal C^{\widetilde{M}}.$\end{lem}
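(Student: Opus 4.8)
The plan is to modify the given sequence $M$ only slightly, so that the associated Denjoy-Carleman class is unchanged by the criterion \eqref{eq:insidecondition}, while forcing the ratios $m_n := M_{n+1}/M_n$ to become strictly increasing. Since $M$ is already logarithmically convex, the $m_n$ form a non-decreasing sequence; the only obstruction to strict increase is the presence of ``flat stretches'' where $m_n = m_{n+1} = \cdots = m_{n+\ell}$. The idea is to perturb $M_n$ by a factor that grows subexponentially (so it does not affect \eqref{eq:insidecondition}) but that breaks every such tie.

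Concretely, I would set $\widetilde M_n := M_n \cdot c^{s(n)}$ for a suitable fixed constant $c > 1$ and a suitable sequence $s(n)$. We need: (a) $\widetilde M_0 = 1$, so $s(0) = 0$; (b) $\widetilde M$ non-decreasing, which is automatic since both factors are non-decreasing once $s$ is; (c) $(\widetilde M_n/M_n)^{1/n} = c^{s(n)/n}$ bounded, i.e. $s(n) = o(n)$, actually $s(n) = O(\log n)$ will do and is easy to arrange, which by \eqref{eq:insidecondition} (applied in both directions, since the reverse ratio $M_n/\widetilde M_n$ also has bounded $n$-th root) gives $\CC^{\widetilde M} = \CC^M$; and (d) the ratios $\widetilde m_n = m_n c^{s(n+1)-s(n)}$ are strictly increasing. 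For (d), since $m_n$ is already non-decreasing, it suffices that the increments $\delta_n := s(n+1) - s(n) \geq 0$ are chosen so that whenever $m_n = m_{n+1}$ we have $\delta_{n+1} > \delta_n$, and more generally $\widetilde m_{n+1} > \widetilde m_n$ always. A clean way: let $\delta_n$ be an integer-valued non-decreasing sequence that is \emph{strictly} increasing on each maximal flat stretch of $(m_n)$ and such that the jump of $m_n$ at the end of a flat stretch dominates the (bounded, since we can keep $\delta_n$ from ever decreasing and reset growth appropriately) change in the $c^{\delta_n}$ factor — but more simply, one can just take $\delta_n$ strictly increasing in $n$ outright on the flat parts while ensuring $c^{\delta_{n+1}-\delta_n}$ never undoes a genuine increase of $m_n$; since we are free to choose $c$ close to $1$ and $\delta_n$ growing slowly, and since on a genuine increase $m_{n+1} > m_n$ with a definite gap, a careful bookkeeping closes this. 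Summing, $s(n) = \sum_{i=0}^{n-1}\delta_i$, and one checks $s(n) = O(\log n)$ or at worst $o(n)$, which is all (c) requires.

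The main obstacle is the simultaneous balancing in step (d): we must make $\widetilde m_n$ strictly increasing everywhere without letting the perturbation factor grow too fast (which would change the class). On flat stretches of $m_n$ the perturbation must do all the work, so $\delta_n$ must genuinely increase there; but on long stretches where $m_n$ increases only by tiny amounts, we must be sure the ratio $c^{\delta_{n+1}}/c^{\delta_n} = c^{\delta_{n+1}-\delta_n}$ does not accidentally decrease below $1$ — which it won't if $\delta_n$ is non-decreasing — nor fail to preserve the ordering. The safest route is: take $\delta_n$ \emph{non-decreasing} in $n$ with $\delta_n \to \infty$ but $\delta_n = O(\log n)$, and strictly increasing precisely on the indices lying in flat blocks of $(m_n)$; then $\widetilde m_{n+1}/\widetilde m_n = (m_{n+1}/m_n)\, c^{\delta_{n+1}-\delta_n} \geq c^{\delta_{n+1}-\delta_n} \geq 1$ always, with strict inequality either because $m_{n+1} > m_n$ or because $\delta_{n+1} > \delta_n$. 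One must only verify that every index falls into one of these two cases, i.e. that $\delta$ increases at least once between consecutive genuine jumps of $m$, which we can force by construction. Finally, recording $s(n) = O(\log n)$ gives $\sup_n (\widetilde M_n / M_n)^{1/n} < \infty$ and $\sup_n (M_n/\widetilde M_n)^{1/n} = 1 < \infty$, so $\CC^{\widetilde M} = \CC^M$ by \eqref{eq:insidecondition}, and $\widetilde M$ is non-decreasing and strictly logarithmically convex, as required.
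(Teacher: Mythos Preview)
Your ``safest route'' has a genuine gap. You take $\delta_n$ non-decreasing with $\delta_{n+1}>\delta_n$ whenever $m_{n+1}=m_n$; since $\CC^M$ is non-analytic there is no reason for the flat stretches of $(m_n)$ to be finite in number, so in general this forces $\delta_n\to\infty$. But then $s(n)/n=\frac{1}{n}\sum_{i<n}\delta_i\to\infty$ by Ces\`aro, so $(\widetilde M_n/M_n)^{1/n}=c^{\,s(n)/n}\to\infty$ and \eqref{eq:insidecondition} fails: the class changes. Your assertion that ``$s(n)=O(\log n)$ or at worst $o(n)$'' is therefore false. (Even the weaker bound $\delta_n=O(\log n)$ only gives $s(n)=O(n\log n)$, not $o(n)$; you seem to have conflated $\delta_n$ with $s(n)$.) The monotonicity of $\delta_n$ is exactly what kills you: once the perturbation ratio $c^{\delta_n}$ is forced up on one flat block, you never allow it back down, and the cumulative product blows up.

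Your earlier, vaguer idea---let $\delta$ reset at block boundaries---is the right one, but the hand-wave ``on a genuine increase $m_{n+1}>m_n$ with a definite gap'' is also wrong: there is no uniform lower bound on these gaps, so a fixed $c>1$ cannot be chosen once and for all. The paper's fix is to calibrate the perturbation block by block. Write $\N$ as a disjoint union of maximal intervals $S_k=\{n_k,\dots,n_k+\ell_k-1\}$ on which $m_n$ is constant, let $A_k=\min\bigl(2,\,m_{n_{k+1}}/m_{n_{k+1}-1}\bigr)>1$, and for $n=n_k+i\in S_k$ set $a_n=A_k^{\,i/\ell_k}$; then define $\widetilde M_n=M_n\prod_{j<n}a_j$. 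Now every $a_n\in[1,2]$, so $(\widetilde M_n/M_n)^{1/n}\le 2$ and the class is unchanged; within each block $\widetilde m_n=m_n a_n$ is strictly increasing because $a_n$ is; and at a boundary $\widetilde m_{n_{k+1}-1}=m_{n_{k+1}-1}A_k^{(\ell_k-1)/\ell_k}<m_{n_{k+1}-1}\cdot(m_{n_{k+1}}/m_{n_{k+1}-1})=m_{n_{k+1}}=\widetilde m_{n_{k+1}}$. The point is that the perturbation factor is \emph{bounded} (it resets to $1$ at each block) and its size on each block is tailored to the specific jump that follows.
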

\begin{proof}
For $n \geq 0$, set $m_n = M_{n+1}/M_n$. Partition $\N$ into a union of disjoint intervals $S_k$ on which $m_n$ is constant, i.e.
\[\N = \bigcup_{k=0}^\infty S_k,\]
where $S_k = \{n_k,n_k+1,\ldots,n_k+\ell_k-1\}$,
$m_n = m_{n'}$ for all $n,n' \in S_k$, and $m_{n_{k+1}-1} < m_{n_{k+1}}$. Notice that each $S_k$ really is finite since $\mathcal C^M$ is non-analytic, and that $\#S_k = \ell_k$. We define a sequence $(a_n)_0^\infty$ of real numbers as follows: set
\[A := \min\left(2,\frac{m_{n_{k+1}}}{m_{n_{k+1}-1}}\right)\]
and then if $n=n_k+i \in S_k$, 
\[a_{n} := A^{i/\ell_k}.\]
Notice that since $n_{k+1} \in S_{k+1}$ but $n_{k+1}-1 \in S_k$ $\frac{m_{n_{k+1}}}{m_{n_{k+1}-1}}, A > 1$ and also that $1 \leq a_n \leq 2$ for all $n$.
Define $\widetilde{M}_0 = M_0 = 1$
and
\[\widetilde{M}_n = M_n\prod_{k=0}^{n-1}a_k\]
for $n \geq 1$. It is easy to verify that $\widetilde{M}$ is non-decreasing, strictly logarithmically-convex, and that $\CC^M = \CC^{\widetilde{M}}$. 
\end{proof}

\begin{proof}[Proof of Theorem~\ref{thm:formaleverywherenot}]\phantomsection\label{pf:formaleverywherenot}
The case $\CC^M = \CC^\omega$ is easy; the function $f(x) = e^{-1/x^2}$ satisfies all the necessary propeties. Assume from now on that $\CC^M \neq \CC^\omega$. In light of Lemma~\ref{lem:strictlogconvex}, we might as well assume that $M$ is strictly logarithmically convex. The function in the construction below is complex-valued. The case $\F=\R$ follows from the case $\F=\C$ by considering real and imaginary parts. For the case $\F=\C$, the idea is to construct $f$ as an infinite sum of functions described in Proposition~\ref{prop:elementarybuildingblocks}, but shifted so that the points at which we have a lower bound on the derivatives, analogous to those of Proposition~\ref{prop:elementarybuildingblocks}\satref{sat:elementarybuildingblocks:lower}, are on a sequence tending to $0$. Consider the sequence $(M_n^{1/n})_{n=1}^\infty$. Since $\CC^M$ is not analytic, $M_n^{1/n} \to \infty$. Set $b_n = M_n^{1/n}$. Note that the terms $b_n$ are strictly increasing (and in particular distinct) since $M$ is strictly logarithmically convex.
Then, define $a_n := \recip{\sqrt{b_n}}$ for all $n$, so that $a_n \to 0$.

We also define a family of non-decreasing, logarithmically-convex sequences indexed by $k$ ($k \in \Z$, $k \geq 1$), $M^k = (M^k_n)_{n=0}^\infty$, with $M^k_0 = 1$ by
\[M^k_n := \begin{cases} 
	    1 & \text{ if } k > n \\
            c_k^{2n-2k+1}M_n & \text{ if } k \leq n
            \end{cases}\]
for all $k \geq 1$, where $c_k\geq M_k$ are large constants to be determined later, but which will depend only on the sequences $(a_n)$ and $(M_n)$.

\phantomsection\label{pf:formaleverywherenot:sameclass}Notice that $\CC^M = \CC^{M^k}$, for all $k \geq 1$. Let $h_k$ be the function given by Proposition~\ref{prop:elementarybuildingblocks} applied to the sequence $M^k$, and set $f_k(x) = h_k(x-a_k)$, for all $k$. Then the $f_k \in \CC^\infty(\R)$ and satisfy:
\begin{enumerate}[label = {(\roman*)}]
\item for all $j \geq 0$ and all $x \in \R$, $|f_k^{(j)}(x)| \leq j!M^k_j$;
\item for all $j \geq 0$ and for all $x \neq a_k$, $|f_k^{(j)}(x)| \leq |x-a_k|^{-(j+1)}j!$;
\item for all $j \geq 1$ $|f_k^{(j)}(a_k)| \geq \recip{2^j}j!M^k_j$.
\end{enumerate}
Define
\begin{equation}\label{eq:thirdfunc}f(x) := \sum_{k=1}^\infty \recip{2^k}f_k(x).\end{equation}

We will verify that $f$ satisfies all of the necessary properties. 

\label{pf:formaleverywherenot:smooth}First we prove that \eqref{eq:thirdfunc} converges uniformly on $\R$ together with its derivatives of every order. Then, $f \in \CC^\infty(\R)$ and we can differentiate term-by-term. following estimates on the $j^\text{th}$ derivative of a general term of the series in \eqref{eq:thirdfunc}"'
\begin{align*}
\left|\recip{2^k}f_k^{(j)}(x)\right| &\leq \recip{2^k}M^k_jj!= 
\begin{cases}\frac{j!M^k_j}{2^k}\quad&\text{ if } k \leq j\\
\frac{j!}{2^k}\quad&\text{ otherwise.}\end{cases}\end{align*}
Since the sum
\[\sum_{k=1}^j \frac{j!M^k_j}{2^k} + \sum_{k=j+1}^\infty \frac{j!}{2^k} < \infty,\]
the sum converges absolutely and uniformly on $\R$ by the M-test.

\label{pf:formaleverywherenot:CM}To prove \satref{sat:formaleverywherenot:CM}, we show that for each $x \neq 0$, there is some neighbourhood $U$ containing $x$ and constants $A,B$ such that, for all $j$ and all $y \in U$,
\[|f^{(j)}(y)| \leq AB^jj!M_j.\]
We distinguish two cases: $x \neq a_n$ for all $n$, and $x = a_n$, for some $n$. In the first case, there is a neighbourhood $U$ of $x$ and a $\delta > 0$ such that $\inf_k |y-a_k| > \delta$ for all $y \in U$. Then we see that, for $y \in U$ and $j \geq 0$,
\begin{align*}
|f^{(j)}(y)| &\leq j!\sum_{k=1}^\infty \recip{2^k}|x-a_k|^{-(j+1)}\\
&\leq j!\sum_{k=1}^\infty \recip{2^k}\left(\recip{\delta}\right)^{j+1} \hspace{-6pt} = \recip{\delta}\left(\recip{\delta}\right)^{j}j! \leq \recip{\delta}\left(\recip{\delta}\right)^{j}j!M_j.
\end{align*}

In the second case, suppose $x = a_n$. Then there is a neighbourhood $U$ of $a_n$ and $\delta = \delta_n > 0$ such that $\inf_{k \neq n} |y-a_k| > \delta$ for all $y \in U$. Let $A = \max(\delta^{-1}, 1)$. We see that, for $y \in U$ and $j \geq 0$,
\begin{align*}
|f^{(j)}(y)| &\leq j!\sum_{k\neq n} \recip{2^k}|x-a_k|^{-(j+1)} + j!\recip{2^n}M^n_j\\
&\leq j!\sum_{k=1}^\infty \recip{2^k}\left(\recip{\delta}\right)^{j+1} \hspace{-6pt}+ j!c_n^{2j+1}M_j
= \recip{\delta}\left(\recip{\delta}\right)^{j}j!+j!c_n^{2j+1}M_j\\ \leq (2Ac_n)(c_n^2A)^jj!M_j.\\
\end{align*}

\label{pf:formaleverywherenot:formal}Showing \satref{sat:formaleverywherenot:formal} is an easy computation. Recall that, by the logarithmic convexity of $M$, for any positive integers $j,k$ with $k \leq j$, $M_k^{1/k} \leq M_j^{1/j}$. So, for $j \geq 1$,
\begin{align*}
|f^{(j)}(0)| &\leq j!\sum_{k=1}^j \recip{2^k}|a_k|^{-(j+1)} + j!\sum_{k=j+1}^\infty \recip{2^k}M^k_j \\
&\leq j!\sum_{k=1}^j\sqrt{b_k}^{2j} + j!\sum_{k=j+1}^\infty \recip{2^k} = j!\sum_{k=1}^j M_k^{j/k} + \frac{j!}{2^j} \leq 2e^jj!M_j.
\end{align*}

\label{pf:formaleverywherenot:notCM}In order to show \satref{sat:formaleverywherenot:notCM}, we will need to pick appropriate $c_n$. Note that for all $n\geq 1$
\begin{align}
|f^{(n)}(a_n)| &\geq \recip{2^n}\recip{2^n}M^n_nn!-n!\sum_{k \neq n}\recip{2^k}|a_n-a_k|^{-(n+1)} \nonumber\\
\label{eq:lowerboundestimateone}&= \recip{2^n}\recip{2^n}c_nM_nn! - n!\sum_{k \neq n}\recip{2^k}|a_n-a_k|^{-(n+1)}.
\end{align}
Since
\[n!\sum_{k \neq n}\recip{2^k}|a_n-a_k|^{-(n+1)} < n!\left(\inf_{n\neq k} |a_n-a_k|\right)^{-(n+1)} < \infty,\]
we can choose $c_n \geq M_n$ large so that \eqref{eq:lowerboundestimateone} is bigger than $n^nn!M_n$, and hence
\begin{align*}
|f^{(n)}(a_n)| \geq n^nn!M_n.
\end{align*}
So, if $f \in \CC^M(\R)$, then there would be some $\epsilon > 0$ and constants $A,B > 0$ such that for $|x| < \epsilon$
\[|f^{(n)}(x)| \leq AB^nn!M_n.\]
In particular, for all but finitely many $n$, $|a_n| < \epsilon$ and
\[n^nn!M_n \leq |f^{(n)}(a_n)| \leq AB^nn!M_n.\]
which is impossible, since $n^n$ grows more quickly than any exponential.
\end{proof}

\begin{prop}\label{prop:formalmeansalmostCM} Let $\CC^M$ be any Denjoy-Carleman class, $U \subseteq \R^p$ open (for $p \geq 1$), and suppose $f \in \CC^\infty(U,\F)$. Then, if $f \in \mathcal F^M(x,\F)$, for each $x \in U$ there exists an open dense subset $V$ of $U$ such that $f \in \CC^M(V,\F)$.\end{prop}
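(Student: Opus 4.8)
The plan is to use a Baire category argument. For each pair of positive integers $A,B$, define
\[
V_{A,B} := \{x \in U \: \text{there is an open neighbourhood } W \text{ of } x \text{ with } |D^\alpha f(y)| \leq AB^{|\alpha|}|\alpha|!M_{|\alpha|} \text{ for all } y \in W, \alpha \in \N^p\}.
\]
Each $V_{A,B}$ is open by construction, so $V := \bigcup_{A,B} V_{A,B}$ is open, and on $V$ the function $f$ is locally of the form required by condition (i) in the definition of $\CC^M$, hence $f \in \CC^M(V,\F)$. It therefore remains only to show that $V$ is dense in $U$.

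To that end, I would fix an arbitrary nonempty open set $W_0 \subseteq U$ with compact closure $\overline{W_0} \subseteq U$, and show $V \cap W_0 \neq \emptyset$. For each $A,B$ consider the closed subset
\[
E_{A,B} := \{x \in \overline{W_0} \: |D^\alpha f(x)| \leq AB^{|\alpha|}|\alpha|!M_{|\alpha|} \text{ for all } \alpha \in \N^p\};
\]
this is closed since it is an intersection of closed sets (each inequality is a closed condition by continuity of $D^\alpha f$). The hypothesis $f \in \mathcal F^M(x,\F)$ for every $x$ means exactly that every $x \in \overline{W_0}$ lies in some $E_{A,B}$, so $\overline{W_0} = \bigcup_{A,B} E_{A,B}$, a countable union. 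Since $\overline{W_0}$ is a complete metric space, the Baire category theorem provides some $A_0,B_0$ for which $E_{A_0,B_0}$ has nonempty interior relative to $\overline{W_0}$; that interior contains a nonempty open subset $W_1$ of $\R^p$ with $W_1 \subseteq W_0$. Then every point of $W_1$ lies in $V_{A_0,B_0}$ (take $W = W_1$ as the neighbourhood), so $W_1 \subseteq V \cap W_0$, proving density.

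The one point requiring a little care — and the main obstacle, such as it is — is the passage from ``$E_{A_0,B_0}$ has nonempty interior in $\overline{W_0}$'' to producing a genuine open set $W_1 \subseteq \R^p$ on which the bound holds for \emph{all} $y$, not merely for points of $E_{A_0,B_0}$. This is immediate here: if $G$ is open in $\overline{W_0}$ with $\emptyset \neq G \subseteq E_{A_0,B_0}$, pick any $x \in G$ and an open ball $W_1 \subseteq \R^p$ about $x$ with $W_1 \cap \overline{W_0} \subseteq G$; shrinking $W_1$ so that $W_1 \subseteq W_0 \subseteq \overline{W_0}$ forces $W_1 = W_1 \cap \overline{W_0} \subseteq G \subseteq E_{A_0,B_0}$, so the bound holds on all of $W_1$. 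Thus $W_1 \subseteq V_{A_0,B_0} \subseteq V$, completing the argument. (Remark~\ref{rk:finitelmanydonotmatter} is not even needed, though one could phrase the sets $E_{A,B}$ using all-but-finitely-many derivatives if desired.)
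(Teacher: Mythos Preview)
Your proof is correct and follows essentially the same Baire category argument as the paper: cover a relatively compact piece of $U$ by the closed sets where a fixed $(A,B)$-bound holds, apply Baire to obtain one with nonempty interior, and take that interior as the open set on which $f$ is $\CC^M$. The only cosmetic differences are that the paper fixes the constant $A'=\max(\sup_{W'}|f|,1)$ in advance and absorbs the pointwise constant $P_x$ into the exponential factor (hence a one-parameter family $S_N$ rather than your two-parameter family $E_{A,B}$), and that the paper applies Baire to the locally compact Hausdorff open set $W'$ rather than to its compact closure; neither change affects the substance of the argument.
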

\begin{proof}
It suffices to prove that for each non-empty open $W_1 \subseteq U$, there exists a non-empty open $W_2 \subseteq W_1$ such that $f \in \CC^M(W_2,\F)$. So, suppose a non-empty open $W_1 \subseteq U$ is given. Let $W' \subseteq W_1$ be open, bounded, with its closure contained inside $W_1$. Let $A$ be an upper bound of $f$ on $W'$. Set $A' = \max(A,1)$, and for each $B > 0$ set
\[S_B := \{x \in W' \: |D^\alpha f(x)| \leq A'B^{|\alpha|}|\alpha|!M_{|\alpha|} \text{ for all } \alpha\in\N^p\}.\]
By assumption, since for each $x \in W'$, $f \in \mathcal F^M(x,\F)$, there are $P_x,Q_x > 0$ such that
\[|D^\alpha f(x)| \leq P_xQ_x^{|\alpha|}|\alpha|!M_{|\alpha|}\]
for all $\alpha\in\N^p$. Considering the cases $P_x/A' \leq 1$ and $P_x/A' > 1$ separately, it is easy to see that for each $x \in W'$, there is some $B>0$ such that $x \in S_B$. It follows that
\[W' = \bigcup_{N=1}^\infty S_N.\]
Since for each $\alpha$, $D^\alpha f$ is continuous, each $S_N$ is closed (with respect to the subspace topology on $W'$). Since $W'$ is locally compact and Hausdorff, the Baire category theorem provides at least one $N_0$ such that $S_{N_0}$ has non-empty interior (with respect to the subspace topology on $W'$). Let $W_2$ be the interior of $S_{N_0}$. By definition $f \in \CC^M(W_2,\F)$, and $W_2 \subseteq W_1$ is open, as desired.
\end{proof}

\section{A smooth function which is quasianalytic on every curve (of a given quasianalytic Denjoy-Carleman class), yet not in the class}

The idea for constructing this function is similar in spirit to the idea for the function constructed in \S\ref{sec:formalnotCM}. The idea is to construct $f$ as a series of functions $f_k$ whose $(2k)^\text{th}$ derivatives at points $a_k$ is large, where $(a_k)$ is a sequence tending to $0$ on some flat curve, and whose derivatives at points other than $a_k$ is sufficiently nice. Since there are no quasianalytic flat curves, this will imply that the function will be quasianalytic on each quasianalytic curve, but will not be quasianalytic.

We first give an analogue of Proposition~\ref{prop:elementarybuildingblocks} for dimension $>1$; this is Proposition~\ref{prop:ndbuildingblocks}, below. The proof of the latter uses the following lemma, which provides a way of passing a function in one variable with given derivative bounds to a function in many variables with similar derivative bounds.

\begin{lem}\label{lem:passingtohigherdimensions} Let $p \geq 2$, and let $g \in \CC^\infty(\R)$ denote a function such that
 \[|g^{(j)}(t)| \leq j!C_{t,j},\]
where $C_{t,j}$ is a non-decreasing sequence for each $t \in \R$. Set
\[f(x) := g(||x||^2) = g(x_1^2+\cdots+x_p^2).\]
Then $f \in \CC^\infty(\R^p)$ and\emph{:}
\begin{enumerate}[label = \emph{(\arabic*)}]
\item \label{sat:passingtohigherdimensions:upper}for all $\alpha \in \N^p$, \[|D^\alpha f(x)| \leq (B(||x||+1))^{|\alpha|}|\alpha|!C_{||x||^2,|\alpha|}\emph{;}\]
\item \label{sat:passingtohigherdimensions:lower}for all $1\leq i \leq p$ and $n \geq 0$, \[\frac{\partial^{2n}f}{\partial x_i^{2n}} (0) = g^{(n)}(0)\frac{(2n)!}{n!}\emph{;}\]
\end{enumerate}
where $B$ depends only on $p$ (not on $g$, $\alpha$, or $x$). 
\end{lem}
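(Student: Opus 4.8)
The plan is to express $D^\alpha f$ via the Faà di Bruno formula applied to the composition $f = g \circ h$, where $h(x) = \|x\|^2 = x_1^2 + \cdots + x_p^2$. The key simplification is that $h$ is a quadratic polynomial: every partial derivative $D^\beta h$ vanishes for $|\beta| \geq 3$, so in the multivariate Faà di Bruno expansion only "blocks" of size $1$ or $2$ appear. Concretely, $D^\alpha f(x)$ is a finite sum, indexed by the ways of grouping the $|\alpha|$ coordinate-slots of $\alpha$ into singletons and pairs, of terms of the form $g^{(n)}(h(x)) \cdot \prod (\text{first- or second-order partials of }h)$, where $n$ is the number of blocks. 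Since $D^\beta h$ is either $2x_i$ (for $\beta = e_i$), $2$ (for $\beta = 2e_i$), or $0$, each such product is bounded by a power of $\max(2\|x\|, 2) \leq 2(\|x\|+1)$, with exponent at most $|\alpha|$. Counting the blocks: if there are $s$ singleton blocks and $r$ pair blocks then $s + 2r = |\alpha|$ and $n = s+r \leq |\alpha|$, so $C_{h(x),n} \leq C_{\|x\|^2, |\alpha|}$ by the monotonicity hypothesis on $C_{t,\cdot}$. The remaining task for \ref{sat:passingtohigherdimensions:upper} is purely combinatorial: bound the number of such set-partition-into-singletons-and-pairs terms, weighted by the Faà di Bruno multinomial coefficients, by $B^{|\alpha|} |\alpha|!$ for a constant $B = B(p)$. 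This is where most of the bookkeeping lives, but it is a standard estimate — the number of partitions of an $m$-element set into blocks of size $\leq 2$ is at most $m!$, and the per-variable combinatorial factors contribute at most a further $C^{|\alpha|}$ with $C$ absolute; one absorbs the factor $2^{|\alpha|}$ from the $D^\beta h$ bounds and a factor depending on $p$ from how many ways the second-derivative variable can be chosen, into $B$.

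For \ref{sat:passingtohigherdimensions:lower}, I would argue directly rather than through Faà di Bruno. Fix $i$ and set $t = x_i$, freezing the other coordinates at $0$; then along this line $f = g(t^2 + c)$ with $c = 0$, i.e. we need $\frac{d^{2n}}{dt^{2n}}\big|_{t=0} g(t^2)$. Expanding $g$ in its Taylor series at $0$ (or, more carefully, using that only the degree-$2n$ term of any finite Taylor polynomial of $g(t^2)$ contributes to the $2n$-th derivative at $0$, with the tail controlled by smoothness), the function $g(t^2)$ has Taylor coefficient in degree $2n$ equal to $g^{(n)}(0)/n!$; hence its $2n$-th derivative at $0$ is $(2n)! \cdot g^{(n)}(0)/n! = g^{(n)}(0)(2n)!/n!$, which is exactly the claim. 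One should note the odd-order derivatives at $0$ vanish, though that is not needed for the statement.

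The main obstacle is the clean combinatorial estimate in \ref{sat:passingtohigherdimensions:upper}: one must be careful that the Faà di Bruno coefficients, which a priori carry factorials like $|\alpha|!/(\text{products over blocks})$, really do collapse to something bounded by $B^{|\alpha|}|\alpha|!$ once summed over all admissible partitions, and that the constant $B$ can be chosen uniformly in $g$, $\alpha$, and $x$ (depending only on $p$). The structural fact that makes this work — and that should be emphasized — is precisely that $h$ is a polynomial of degree $2$, so no block of size $\geq 3$ ever occurs; without that, the composition of a function of moderate growth with $\|x\|^2$ could behave much worse. A convenient way to package the argument is to prove it first for $p=1$ (where $f(x) = g(x^2)$ and the estimate is a one-line induction on the order of differentiation), and then handle general $p$ by noting that each $\partial/\partial x_i$ acting on $g(\|x\|^2)$ produces $2x_i \, g'(\|x\|^2)$, so an induction on $|\alpha|$ — tracking a bound of the shape $|D^\alpha f(x)| \leq \sum_{k \leq |\alpha|} P_{\alpha,k}(x)\, |g^{(k)}(\|x\|^2)|$ with $P_{\alpha,k}$ polynomials whose coefficients and degrees grow in a controlled way — delivers \ref{sat:passingtohigherdimensions:upper} after invoking the monotonicity of $C_{t,\cdot}$ to replace every $C_{\|x\|^2,k}$ by $C_{\|x\|^2,|\alpha|}$.
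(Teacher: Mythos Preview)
Your approach is essentially the same as the paper's: both apply the multivariate Fa\`{a} di Bruno formula to $g\circ h$ with $h(x)=\|x\|^2$, exploit that $h$ has degree $2$ so only first- and second-order inner derivatives survive, use $n\leq|\alpha|$ and the monotonicity of $C_{t,\cdot}$ to replace $C_{\|x\|^2,n}$ by $C_{\|x\|^2,|\alpha|}$, and then bound the combinatorics by something of the form $B^{|\alpha|}|\alpha|!$. The paper carries out the combinatorial step concretely---parametrizing the Fa\`{a} di Bruno sum by tuples $(k_{1,1},k_{1,2},\ldots,k_{p,1},k_{p,2})$ with $\alpha_j=k_{j,1}+2k_{j,2}$, bounding the multinomial coefficient $n!/(k_{1,1}!\cdots k_{p,2}!)$ by $(2p)^{|\alpha|}$ and the number of admissible tuples by $(|\alpha|+1)^p\leq e^{p|\alpha|}$---whereas you leave this as ``standard'' and gesture at an inductive alternative; either route works, but the paper's explicit parametrization is cleaner than tracking polynomials $P_{\alpha,k}$ through an induction. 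For part \ref{sat:passingtohigherdimensions:lower} your Taylor-coefficient argument is exactly what the paper means by ``looking at the formal power series of $g$ at $0$''.
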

\begin{proof}
By a multivariate version of Fa\`{a} di Bruno's formula (see, for instance, \cite[Prop.\,4.3]{BM}) applied to $g(||x||^2)$,
\begin{equation}\label{eq:faadibrunomod}D^\alpha f(x) = \alpha!\sum \recip{k_{1,1}!k_{1,2}!\cdots k_{p,1}!k_{p,2}!}g^{(n)}(||x||^2)\prod_{j=1}^p(2x_j)^{k_{j,1}},\end{equation}
where $n = k_{1,1} + k_{1,2} + \cdots + k_{p,1}+k_{p,2}$ and the sum is taken over all $2p$--tuples of non-negative integers $(k_{1,1},k_{1,2},\ldots,k_{p,1},k_{p,2})$ such that
\begin{equation}\label{eq:sumright}\alpha = (\alpha_1,\ldots,\alpha_p) = (k_{1,1}+2k_{1,2},\ldots,k_{p,1}+2k_{p,2}).\end{equation}
Since $n = k_{1,1}+\cdots+k_{p,2} \leq \alpha_1+\cdots+\alpha_p = |\alpha|$ whenever $k_{i,j}$ satisfy \eqref{eq:sumright} ($1\leq i\leq p, j=1,2)$, we see that
\begin{align}
|D^\alpha f(x)| &\leq \alpha!\sum \recip{k_{1,1}!k_{1,2}!\cdots k_{p,1}!k_{p,2}!}|g^{(n)}(||x||^2)|\prod_{j=1}^p(2|x_j|)^{k_{j,1}} \nonumber \\
&\leq \alpha!\sum \recip{k_{1,1}!k_{1,2}!\cdots k_{p,1}!k_{p,2}!}n!C_{||x||^2,|\alpha|}2^{|\alpha|}(||x||+1)^{|\alpha|} \nonumber \\
\label{eq:step1}&\leq (2(||x||+1))^{|\alpha|}C_{||x||^2,|\alpha|}|\alpha|!\sum \frac{n!}{k_{1,1}!\cdots k_{p,2}!},
\end{align}
where the summation is as in \eqref{eq:faadibrunomod}.
By the multinomial theorem,
\[\frac{n!}{k_{1,1}!\cdots k_{p,2}!} \leq \sum_{\ell_1+\cdots+\ell_{2p}=n} \frac{n}{\ell_1\cdots\ell_{2p}} = (\underbrace{1+\cdots+1}_{2p \text{ 1's}})^n = (2p)^n \leq (2p)^{|\alpha|}.\]
Thus, from \eqref{eq:step1},
\[|D^\alpha f(x)| \leq (4p(||x||+1))^{|\alpha|}|\alpha!|C_{||x||^2,|\alpha|}\# S,\]
where $S$ is the set of all $2p$--tuples of non-negative integers $(k_{1,1},k_{1,2},\ldots,k_{p,1},k_{p,2})$ satisfying \eqref{eq:sumright}. Since, for each $i$, $\alpha$ and $k_{i,1}$ uniquely determine $k_{i,2}$, and there are at most $|\alpha|+1$ choices of $k_{i,1}$, $\#S \leq (|\alpha|+1)^p \leq (e^p)^{|\alpha|}$. So in all,
\[|D^\alpha f(x)| \leq (4pe^p(||x||+1))^{|\alpha|}|\alpha!|C_{||x||^2,|\alpha|},\] which is  \satref{sat:passingtohigherdimensions:upper}. \satref{sat:passingtohigherdimensions:lower} is obvious either again from Fa\`{a} di Bruno's formula, or by looking at the formal power series of $g$ at $0$.
\end{proof}
\newcommand{\Alp}{|\alpha|}
\begin{prop}\label{prop:ndbuildingblocks} For any $p \geq 2$ and any non-analytic Denjoy-Carleman class $\CC^M$, there exists $f \in \CC^M(\R^p)$ satisfying\emph{:}
\begin{enumerate}[label = \emph{(\arabic*)}]
\item \label{sat:ndbuildingblocks:upperCM}for any compact $K \subseteq \R^p$, and for all $\alpha \in \N^p$, $x \in K$, \[|D^{\alpha}f(x)| \leq (B(||K||+1))^{|\alpha|}|\alpha|!M_{|\alpha|}\emph{;}\]
\item \label{sat:ndbuildingblocks:upperanal}for any compact $K \subseteq \R^p$, and for all $\alpha \in \N^p$, $x \in K\setminus\{0\}$, \[|D^{\alpha}f(x)| \leq (B(||K||+1))^{|\alpha|}|\alpha|!||x||^{-2(|\alpha|+1)},\text{ if }||x|| \leq 1\emph{;}\]
\item \label{sat:ndbuildingblocks:uppernice}for any compact set $K \subseteq \R^p$, and for all $\alpha \in \N^p$, $x \in K\setminus\{0\}$, \[|D^{\alpha}f(x)| \leq (B(||K||+1))^{|\alpha|}|\alpha|!,\text{ if }||x|| \geq 1\emph{;}\]
\item \label{sat:ndbuildingblocks:lower}for all $n \geq 1$, \[\left|\frac{\partial^{2n}f}{\partial x_1^{2n}}(0)\right| \geq (2n)!M_n,\]
\end{enumerate}
where $B$ depends only on $p$ (as in Lemma~\ref{lem:passingtohigherdimensions}; $B$ and does not depend on $M$ or $K$).
\end{prop}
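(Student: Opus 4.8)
The plan is to build $f$ by taking the one-variable function $h$ from Proposition~\ref{prop:elementarybuildingblocks} (applied to the sequence $M$) and passing it to higher dimensions via Lemma~\ref{lem:passingtohigherdimensions}, setting $f(x) := h(\|x\|^2)$. The four conclusions of Proposition~\ref{prop:ndbuildingblocks} should then fall out of the four conclusions of Proposition~\ref{prop:elementarybuildingblocks} combined with the two conclusions of Lemma~\ref{lem:passingtohigherdimensions}, modulo bookkeeping to get the estimates into the stated form. The one subtlety is that Lemma~\ref{lem:passingtohigherdimensions} requires the bound $|g^{(j)}(t)| \leq j! C_{t,j}$ with $C_{t,j}$ \emph{non-decreasing in $j$ for each fixed $t$}, so I need to choose the right $C_{t,j}$ for each regime.

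First I would record that $h$ satisfies $|h^{(j)}(t)| \leq j! M_j$ for all $t$ (Proposition~\ref{prop:elementarybuildingblocks}\satref{sat:elementarybuildingblocks:CMupper}) and $|h^{(j)}(t)| \leq j! |t|^{-(j+1)}$ for $t \neq 0$ (Proposition~\ref{prop:elementarybuildingblocks}\satref{sat:elementarybuildingblocks:Analupper}), and $|h^{(n)}(0)| \geq 2^{-n} n! M_n$ for $n \geq 1$ (Proposition~\ref{prop:elementarybuildingblocks}\satref{sat:elementarybuildingblocks:lower}). For \satref{sat:ndbuildingblocks:upperCM} I apply Lemma~\ref{lem:passingtohigherdimensions} with $C_{t,j} = M_j$ (non-decreasing since $M$ is); this gives $|D^\alpha f(x)| \leq (B(\|x\|+1))^{|\alpha|} |\alpha|! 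M_{|\alpha|}$, and replacing $\|x\|$ by $\|K\|$ over a compact $K$ gives the stated bound (in particular $f \in \CC^M(\R^p)$). For \satref{sat:ndbuildingblocks:upperanal}, on the region $\|x\| \leq 1$ (so $\|x\|^2 \leq \|x\| \leq 1$ and $\|x\|^{-2} \geq 1$), I use $C_{t,j} = \max(M_j, |t|^{-(j+1)})$ evaluated at $t = \|x\|^2$: this dominates $|h^{(j)}(t)|/j!$, is non-decreasing in $j$ (for fixed $t \neq 0$ with $|t| \leq 1$, both $M_j$ and $|t|^{-(j+1)}$ are non-decreasing), and $C_{\|x\|^2, |\alpha|} = \max(M_{|\alpha|}, \|x\|^{-2(|\alpha|+1)}) \leq M_{|\alpha|} \cdot \|x\|^{-2(|\alpha|+1)}$ won't quite work directly — instead I simply bound $\max(M_{|\alpha|}, \|x\|^{-2(|\alpha|+1)}) \leq \|x\|^{-2(|\alpha|+1)}$ on the region where this is the larger term, and on the region where $M_{|\alpha|}$ dominates I already have the bound from \satref{sat:ndbuildingblocks:upperCM}; since $M_0 = 1$ and $\|x\| \leq 1$ imply $M_{|\alpha|} \leq M_{|\alpha|}\|x\|^{-2(|\alpha|+1)}$... actually the cleanest route is: for $\|x\| \leq 1$, $\|x\|^{-2(|\alpha|+1)} \geq 1 \geq$ nothing useful, so instead I take $C_{t,j} = |t|^{-(j+1)}$ directly (valid for $t \neq 0$, non-decreasing when $|t| \leq 1$), yielding $|D^\alpha f(x)| \leq (B(\|x\|+1))^{|\alpha|}|\alpha|! \|x\|^{-2(|\alpha|+1)}$, exactly \satref{sat:ndbuildingblocks:upperanal}. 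Symmetrically, for \satref{sat:ndbuildingblocks:uppernice} on $\|x\| \geq 1$ (so $\|x\|^2 \geq 1$), I use $C_{t,j} = \max(M_j, 1)= M_j$ again but note $|h^{(j)}(\|x\|^2)| \leq j!\|x\|^{-2(j+1)} \leq j!$; taking $C_{t,j} = 1$ (constant, hence non-decreasing) gives $|D^\alpha f(x)| \leq (B(\|x\|+1))^{|\alpha|}|\alpha|!$, which over compact $K$ is \satref{sat:ndbuildingblocks:uppernice}.

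Finally, for \satref{sat:ndbuildingblocks:lower}, Lemma~\ref{lem:passingtohigherdimensions}\satref{sat:passingtohigherdimensions:lower} gives $\frac{\partial^{2n} f}{\partial x_1^{2n}}(0) = h^{(n)}(0) \frac{(2n)!}{n!}$, so $\left|\frac{\partial^{2n} f}{\partial x_1^{2n}}(0)\right| = |h^{(n)}(0)| \frac{(2n)!}{n!} \geq \frac{1}{2^n} n! M_n \cdot \frac{(2n)!}{n!} = \frac{(2n)!M_n}{2^n}$, which is slightly weaker than the claimed $(2n)! M_n$. To fix this I apply Proposition~\ref{prop:elementarybuildingblocks} not to $M$ but to a scaled sequence: replace $M_n$ by $\widehat{M}_n := 4^n M_n$ (still logarithmically convex, and $\CC^{\widehat M} = \CC^M$ by the ratio criterion \eqref{eq:insidecondition}), which changes the upper bounds only by a harmless geometric factor absorbed into $B$, while the lower bound becomes $|h^{(n)}(0)| \geq 2^{-n} n! \widehat M_n = 2^n n! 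M_n$, giving $\left|\frac{\partial^{2n}f}{\partial x_1^{2n}}(0)\right| \geq 2^n n! M_n \frac{(2n)!}{n!} = 2^n (2n)! M_n \geq (2n)! M_n$. I expect the main (modest) obstacle to be keeping the three separate regimes ($\|x\| \leq 1$ near $0$, $\|x\| \leq 1$ away from $0$, $\|x\| \geq 1$) straight and verifying in each case that the chosen $C_{t,j}$ is genuinely non-decreasing in $j$ so that Lemma~\ref{lem:passingtohigherdimensions} applies; everything else is substitution.
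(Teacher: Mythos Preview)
Your approach is exactly the paper's: the proof there reads, in its entirety, ``Apply Lemma~\ref{lem:passingtohigherdimensions} to Proposition~\ref{prop:elementarybuildingblocks}.'' Your detailed verification of the three regimes for $C_{t,j}$ is correct and simply fills in what the paper leaves implicit.

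You are also right that the direct application only yields $\left|\frac{\partial^{2n}f}{\partial x_1^{2n}}(0)\right| \geq 2^{-n}(2n)!M_n$ rather than the stated $(2n)!M_n$. The paper does not address this, and in fact when Proposition~\ref{prop:ndbuildingblocks} is invoked in the proof of Theorem~\ref{thm:masterthm} the bound actually used is $\left|\frac{\partial^{2n}f_k}{\partial x_1^{2n}}(a_k)\right| \geq \frac{1}{2^n}(2n)!M^k_n$, i.e.\ the weaker inequality. So the discrepancy is a harmless misstatement in the paper; your fix via $\widehat{M}_n = 4^n M_n$ repairs it cleanly (at the cost of replacing $B$ by $4B$, still depending only on $p$), but is not strictly needed for the downstream argument.
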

\begin{proof}
Apply Lemma~\ref{lem:passingtohigherdimensions} to Proposition~\ref{prop:elementarybuildingblocks}.
\end{proof}

Let $p \geq 2$. For any integer $m \geq 1$, and real number $a > 0$, we denote by $\mathcal S_{a,m}^p$ the set
\[\{x=(x_1,x_2,\ldots,x_p) \in \R^p \: x_1 \geq 0 \text{ and } x_2 \geq ax_1^m\}\]
and by $\mathcal Q^p$ the set
\[\{x=(x_1,x_2,\ldots,x_p) \in \R^p \: x_1 > 0 \text{ and } x_2 > 0\}.\]
The following lemma is elementary:
\begin{lem}\label{lemma:Sdistance}Let $p \geq 2$, $m \geq 1$ an integer, and $a > 0$ a real number. Let $\mathcal S = \mathcal S_{a,m}^p$. Then, for sufficiently small positive $t$,
\[\dist((t,e^{-\recip{t^2}},0,\ldots,0),\mathcal S) := \inf_{s \in \mathcal S}||(t,e^{-\recip{t^2}},0,\ldots,0)-s|| \geq e^{-\recip{t^2}}.\]
\end{lem}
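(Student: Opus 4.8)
The plan is to exhibit an explicit point of $\mathcal S = \mathcal S_{a,m}^p$ that nearly realizes the distance, and then bound the distance below by reducing to a two-variable computation. The key observation is that the point $x_0 := (t, e^{-1/t^2}, 0, \ldots, 0)$ we are measuring from lies \emph{outside} $\mathcal S$ for small $t > 0$: indeed its first coordinate $t$ is positive, so membership in $\mathcal S$ would force $x_2 \geq a t^m$; but $e^{-1/t^2} \to 0$ faster than any power of $t$, so $e^{-1/t^2} < a t^m$ for all sufficiently small $t > 0$, hence $x_0 \notin \mathcal S$. Thus the distance is genuinely positive and it remains to lower-bound it.

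First I would note that only the first two coordinates matter: for any $s = (s_1, \ldots, s_p) \in \mathcal S$ we have $\|x_0 - s\|^2 = (t - s_1)^2 + (e^{-1/t^2} - s_2)^2 + s_3^2 + \cdots + s_p^2 \geq (t - s_1)^2 + (e^{-1/t^2} - s_2)^2$, and the constraints defining $\mathcal S$ only involve $s_1, s_2$. So it suffices to show that the planar set $\{(s_1, s_2) : s_1 \geq 0,\ s_2 \geq a s_1^m\}$ stays at distance at least $e^{-1/t^2}$ from $(t, e^{-1/t^2})$. I would split into cases according to the nearest point's first coordinate. If $s_1 \leq t/2$ (say), then $|t - s_1| \geq t/2$, which already exceeds $e^{-1/t^2}$ for small $t$. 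If $s_1 \geq t/2$, then $s_2 \geq a s_1^m \geq a (t/2)^m$, so $|e^{-1/t^2} - s_2| \geq a(t/2)^m - e^{-1/t^2} \geq e^{-1/t^2}$ for small $t$, again using that $e^{-1/t^2}$ decays faster than any monomial in $t$. In either case $\|x_0 - s\| \geq e^{-1/t^2}$.

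The only mild subtlety — and the "main obstacle," such as it is — is making the threshold "sufficiently small positive $t$" uniform across the two cases and across all $s \in \mathcal S$; this is handled by choosing $t$ small enough that simultaneously $t/2 > e^{-1/t^2}$ and $a(t/2)^m - e^{-1/t^2} > e^{-1/t^2}$, i.e. $a(t/2)^m > 2 e^{-1/t^2}$, both of which hold for all small $t > 0$ since $e^{-1/t^2} = o(t^k)$ for every $k$. With such a $t$ fixed, the case analysis above gives $\dist(x_0, \mathcal S) \geq e^{-1/t^2}$, as claimed. (One could even take an infimum over a sequence $s^{(j)} \in \mathcal S$ approaching the distance and apply the bound termwise; no compactness is needed since $\mathcal S$ is closed but the bound is uniform anyway.)
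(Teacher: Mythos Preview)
Your argument is correct. The paper does not actually supply a proof of this lemma; it simply declares it ``elementary'' and states the result without further justification. Your case split on $s_1 \le t/2$ versus $s_1 \ge t/2$, together with the reduction to the first two coordinates, is exactly the kind of routine verification the paper is implicitly leaving to the reader.
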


\begin{proof}[Proof of Theorem~\ref{thm:masterthm}]
The proof of is very similar to \hyperref[pf:formaleverywherenot]{that of Theorem}~\ref{thm:formaleverywherenot}. The case $\F=\R$ follows immediately from the case $\F = \C$ by considering real and imaginary parts. For the case $\F = \C$, consider the sequence $(b_n)_{n=1}^\infty = (M_n^{1/n})_{n=1}^\infty$. Since $\CC^M \neq \CC^\omega$, $b_n \to \infty$. In light of Lemma~\ref{lem:strictlogconvex}, we might as well assume that the terms of $b_n$ are distinct. For $n \geq 1$, set
\[a_n := \left(\sqrt{\recip{\log{b_n^{1/4}}}},\recip{b_n^{1/4}},0,\ldots,0\right).\]
Then $a_n \to 0$, and $a_n \in \{(t,e^{-\recip{t^2}},0,\ldots,0) \: t > 0\}$.
Define a family of non-decreasing, logarithmically-convex sequences indexed by $k$ ($k \in \Z$, $k \geq 1$), $M^k = (M^k_n)_{n=0}^\infty$, with $M^k_0 = 1$ by
\[M^k_n := \begin{cases} 
	    1 & \text{ if } k > n \\
            c_k^{2n-2k+1}M_n & \text{ if } k \leq n
            \end{cases}\]
where $c_k\geq M_k$ are large constants to be determined later, but which will depend only on the sequences $(a_n)$ and $(M_n)$.

As in the \hyperref[pf:formaleverywherenot:sameclass]{proof of Theorem}~\ref{thm:formaleverywherenot}, $\CC^{M^k} = \CC^M$ for all $k$. Let $h_k$ be the function given by Proposition~\ref{prop:ndbuildingblocks} applied to the sequence $M^k$, and set $f_k(x) = h_k(x-a_k)$, for all $k$. Let $a = 1+\sup_{k\geq1}|a_k|$. Then the $f_k \in \CC^\infty(\R^p)$ and satisfy:
\begin{enumerate}[label = {(\roman*)}]
\item for any compact $K \subseteq \R^p$, and for all $\alpha \in \N^p$, $x \in K$, \[|D^{\alpha}f_k(x)| \leq (B(||K||+a))^{|\alpha|}|\alpha|!M^k_{|\alpha|}\text{;}\]
\item for any compact $K \subseteq \R^p$, and for all $\alpha \in \N^p$, $x \in K\setminus\{a_k\}$, \[|D^{\alpha}f_k(x)| \leq (B(||K||+a))^{|\alpha|}\left(||x-a_k||^{-2(|\alpha|+1)}+1\right)|\alpha|!\text{;}\]
\item for all $n \geq 1$, \[\left|\frac{\partial^{2n}f_k}{\partial x_1^{2n}}(a_k)\right| \geq \recip{2^n}(2n)!M^k_n\text{;}\]
\end{enumerate}
where $B$ does not depend on $k$ or the choice of compact set $K$.

Define \begin{equation}f(x) := \sum_{k=1}\recip{2^k} f_k(x).\end{equation}
We will show that $f$ satisfies all the required properties.

The proof that $f \in \CC^\infty(\R^p)$ and that we can differentiate term-by-term is the same, \emph{mutatis mutandis}, as the \hyperref[pf:formaleverywherenot:smooth]{proof of Theorem}~\ref{thm:formaleverywherenot}\satref{sat:formaleverywherenot:CM} (the difference being that here the estimates must be made on compact sets and that there are more coefficients and several extra terms to keep track of).

The proof of \satref{sat:masterthm:CM} is also the same, \emph{mutatis mutandis}, as \hyperref[pf:formaleverywherenot:CM]{the proof of Theorem}~\ref{thm:formaleverywherenot}\satref{sat:formaleverywherenot:CM} (with the same differences as above).

The proofs of \satref{sat:masterthm:polyCM} and \satref{sat:masterthm:quadCM} are similar to each other, and are both similar to \hyperref[pf:formaleverywherenot:smooth]{proof of Theorem}~\ref{thm:formaleverywherenot}\satref{sat:formaleverywherenot:formal}. Fix $m \geq 1$ an integer, and $a > 0$ a real number. Let $\mathcal S := \mathcal S_{a,m}^p$. If $x \neq 0$, then by \satref{sat:masterthm:CM}, we have the desired bounds locally around $x$ in $\mathcal S$. If $x=0$, then by Lemma~\ref{lemma:Sdistance}, for all but finitely many $k$ (say, for $k \geq j$), 
\[\dist(a_k,\mathcal S) \geq \recip{b_k}.\]
Then, there is a bounded neigbhourhood $U$ of $0$ in $\mathcal S$ (i.e. the intersection of a neighbourhood of $0$ in $\R^p$ with $\mathcal S$) such that for all $y \in U$ and $k < j$, $||y-a_k|| > \delta$. Set $C := \max(\delta^{-1},1)$. Let $K$ be any compact set containing $U$. Then, for any $\alpha$ with $|\alpha| \geq 1$, and any $y \in U$,
\begin{align*}
&|D^{\alpha}f(y)| \leq (B(||K||+a))^{\Alp}\Alp!\left(\sum_{k=1}^{j-1}\recip{2^k}(||y-a_k||^{-2(\Alp+1)}+1) \right. \\
&\qquad\qquad\qquad\qquad\qquad\left. + \sum_{k=j}^{\Alp}\recip{2^k}(||y-a_k||^{-2(\Alp+1)}+1) + \sum_{k=\Alp+1}^\infty \recip{2^k}M^k_{\Alp}\right)\\
&\leq (B(||K||+a))^{\Alp}\Alp!\left((j-1)\delta^{-2(\Alp+1)} + \sum_{k=j}^{\Alp}\left(b_k^{1/4}\right)^{2(\Alp+1)} + \sum_{k=1}^{\infty} \recip{2^k} \right)\\
&\leq (B(||K||+a))^{\Alp}\Alp!\left(e^j\delta^{-4\Alp} + \sum_{k=j}^{\Alp}\left(b_k^{1/4}\right)^{4\Alp} + 1\right)\\
&\leq (B(||K||+a))^{\Alp}\Alp!\left(e^j\delta^{-4\Alp}  + \sum_{k=j}^{\Alp}M_k^{\Alp/k}+e^j\right)\\
&\leq (B(||K||+a))^{\Alp}\Alp!\left(e^j\delta^{-4\Alp} + e^{\Alp}M_{\Alp}+e^j\right)\\
&\leq (3e^j)(eBC^4(||K||+a))^{\Alp}\Alp!M_{\Alp}.
\end{align*}

The proof of \satref{sat:masterthm:quadCM} is nearly identical to the proof of \satref{sat:masterthm:polyCM}. Let $K$ be any compact subset of $\R^p \setminus \mathcal Q^p$. Then, for all $x = (x_1,x_2,\ldots,x_p) \in K$, and $k\geq 1$ (considering the cases $x_1 \leq 0$ and $x_2 \leq 0$ separately), $||x-a_k|| \geq \recip{b_k^{1/4}}$. So, for $\Alp \geq 1$, and all $x \in K$,
\begin{align*}
|D^{\alpha}f(x)| &\leq (B(||K||+a))^{\Alp}\Alp!\left(\sum_{k=1}^{\Alp}\recip{2^k}(||y-a_k||^{-2(\Alp+1)}+1) + \hspace{-1ex}\sum_{k=\Alp+1}^\infty \recip{2^k}M^k_{\Alp}\right)\\
&\leq (B(||K||+a))^{\Alp}\Alp!\left(\sum_{k=1}^{\Alp}\left(b_k^{1/4}\right)^{4\Alp} + \sum_{k=1}^\infty \recip{2^k}\right)\\
&\leq (B(||K||+a))^{\Alp}\Alp!\left(\Alp M_{\Alp} + 1\right) \leq 2(eB(||K||+a))^{\Alp}\Alp!M_{\Alp}.
\end{align*}

The proof of \satref{sat:masterthm:notCM} is similar to \hyperref[pf:formaleverywherenot:notCM]{that of Theorem}~\ref{thm:formaleverywherenot}\satref{sat:formaleverywherenot:notCM}. Note that for $n \geq 1$,
\begin{align}
\left|\frac{\partial^{2n}f}{\partial x_1^{2n}}(a_n)\right| &\geq \recip{2^n}\recip{2^n}M^n_n(2n)! - \sum_{k \neq n} \recip{2^k}(B||a_n||+a)^{2n}(2n)!(||a_n-a_k||^{-2(2n+1)}+1)\nonumber\\
&=\label{eq:lowerboundestimatetwo}\recip{4^n}c_nM_n(2n)! - \sum_{k \neq n} \recip{2^k}(B||a_n||+a)^{2n}(2n)!(||a_n-a_k||^{-2(2n+1)}+1).
\end{align}
Since
\begin{align*}
&\ \sum_{k \neq n} \recip{2^k}(B||a_n||+a)^{2n}(2n)!(||a_n-a_k||^{-2(2n+1)}+1)\\
&\leq (Ba+a)^{2n}(2n)!\left(\left(\inf_{n\neq k} |a_n-a_k|\right)^{-2(2n+1)}+1\right) < \infty,\end{align*}
we can choose $c_n \geq M_n$ large so that \eqref{eq:lowerboundestimatetwo} is bigger than $(2n)^{2n}(2n!M_{2n})$, and hence
\begin{align*}
\left|\frac{\partial^{2n}f}{\partial x_1^{2n}}(a_n)\right| \geq (2n)^{2n}(2n)!M_{2n}.
\end{align*}
So, if $f \in \CC^M(\R^p)$, then on some neighbourhood of $0$, there would be $C,D > 0$ such that, for all $n$ and $x \in U$,
\[\left|\frac{\partial^{2n}f}{\partial x_1^{2n}}(x)\right| \leq CD^{2n}(2n)!M_{2n}.\]
But since $a_n \to 0$, for all but finitely many $n$,
\[(2n)^{2n}(2n)!M_{2n} \leq \left|\frac{\partial^{2n}f}{\partial x_1^{2n}}(a_n)\right| \leq CD^{2n}(2n)!M_{2n},\]
which is an obvious contradiction.
\end{proof}

\begin{proof}[Proof of Theorem~\ref{thm:bigtheorem}]
The case $\F=\R$ follows immediately from the case $\F = \C$ by considering real and imaginary parts.
We show that in the complex case, the function $f$ provided by Theorem~\ref{thm:masterthm} satisfies the necessary properties. We know that $f \in \CC^\infty(\R^p)$ and $f \not \in \CC^M(\R^p)$. Let $\gamma \in \CC^M(U,\R^p)$ ($U \subseteq \R$ open) be an arbitrary quasianalytic curve. It is required to show that $f\circ \gamma \in \CC^M(U)$. This is equivalent to showing that for each $t_0 \in U$, there is some $\epsilon > 0$ such that $f\circ \gamma \in \CC^M((t_0-\epsilon,t_0+\epsilon))$. If $\gamma(t_0) \neq 0$, then there is some $\epsilon > 0$ such that $\gamma(t) \neq 0$, for $t \in (t_0-\epsilon,t_0+\epsilon)$. Then, since $\gamma((t_0-\epsilon,t_0+\epsilon)) \subseteq \R^p\setminus\{0\}$, $f\circ\gamma \in \CC^M((t_0-\epsilon,t_0+\epsilon))$, by Theorem~\ref{thm:composition}.

So, it remains to consider the case $\gamma(t_0) = 0$. Without loss of generality, suppose $t_0 = 0$. We distinguish several cases:
\begin{enumerate}[label = (\roman*)]
\item $\gamma_1^{(n)}(0) = 0$, for all  $n\geq 0$;
\item $\gamma_2^{(n)}(0) = 0$, for all $n\geq 0$;
\item $\gamma_1^{(n_1)}(0) \neq 0$ and $\gamma_2^{(n_2)}(0) \neq 0$, for $n_1,n_2 \in \N$.
\end{enumerate}
In the first case, by quasianalyticity, there is  $\epsilon > 0$ such that $\gamma_1|_{(-\epsilon,\epsilon)} \equiv 0$, and as such $|\gamma_2(t)| \geq |\gamma_1(t)|$, for all $|t| < \epsilon$, so that $\gamma((-\epsilon,\epsilon)) \subseteq (\R^p \setminus \mathcal Q^p)\un\mathcal S_{1,1}^p$, and thus $f \circ \gamma \in \CC^M((-\epsilon,\epsilon))$, by Theorem~\ref{thm:composition}.

In the second case,  by quasianalyticity, there is $\epsilon > 0$ such that $\gamma_2|_{(-\epsilon,\epsilon)} \equiv 0$, and as such, $\gamma((-\epsilon,\epsilon)) \subseteq \R^p\setminus \mathcal Q^p$, and thus $f \circ \gamma \in \CC^M((-\epsilon,\epsilon))$, by Theorem~\ref{thm:composition}.

In the third case, let $k_i$ ($i=1,2$) be the smallest integer such that $\gamma_i^{(k_i)}(0) \neq 0$ (note that each $k_i \geq 1$). Then we can write $\gamma_i(t) = t^{k_i}\delta_i(t)$, for $\delta_i:U\to\R$ continuous, and $\delta_i(0) \neq 0$ (by L'H\^{o}pital's rule). For $\epsilon \leq 1$ small, we can assume that there are constants $a_1,a_2 > 0$ such that $|\delta_1(t)| \leq a_1$ and $|\delta_2(t)| \geq a_2$, for $|t| < \epsilon$.
Let $m$ be any integer at least as big as $k_2/k_1$. Then,
\[\frac{a_2}{a_1^{m}}|\gamma_1(t)|^m = \frac{a_2}{a_1^{m}}|t^{k_1}\delta_1(t)|^m \leq a_2|t|^{k_2} \leq |t^{k_2}\delta_2(t)| = |\gamma_2(t)|,\]
so that \[\gamma((-\epsilon,\epsilon)) \subseteq (\R^p \setminus \mathcal Q^p)\un\mathcal S_{a_2/a_1^m,m}^p,\] and thus $f \circ \gamma \in \CC^M((-\epsilon,\epsilon))$, by Theorem~\ref{thm:composition}.
\end{proof}

\begin{rk}In Theorem~\ref{thm:bigtheorem}, that the function can be taken to be of class $\CC^\infty$ is somewhat surprising, as in the analytic case, a function which is smooth and analytic even on every straight line is already analytic (see \cite[Thm.\,5.5.31]{analyticfinally}). This means that there is a large loss of control when passing from $\CC^\omega$ to larger quasianalytic Denjoy-Carleman classes: the extra assumption of smoothness no longer suffices to recover global quasianalyticity from quasianalyticity on every curve.\end{rk}

\begin{rk}Of course it does not make sense to strengthen the hypotheses of Theorem~\ref{thm:bigtheorem} to requiring that $f$ is $\CC^M$ on every $\CC^\infty$ curve: if $\gamma(t)$ is any $\CC^\infty$ curve that is flat at a point $t=0$, then $f\circ \gamma$ is also flat, and is therefore constant by quasianalyticity. Looking at the composition of $f$ with all flat curves $\gamma$ then implies that $f$ is itself constant, too.\end{rk}

\textit{Acknowledgements.} The author's research was conducted as an NSERC Undergraduate Summer Research project under the supervision of Edward Bierstone. The author would like to thank Dr. Bierstone for raising the question treated in Theorem~\ref{thm:bigtheorem} and for his numerous suggestions for this article. The author is grateful to both Dr. Bierstone and Andr\'{e} Belotto for helping him develop his ideas. The author would also like to thank Armin Rainer and David Nenning for pointing out helping correct numerous errors in the first draft of this article, and for the referee's valuble comments on the second draft.

\begin{bibdiv}
\begin{biblist}

\newcommand{\perafter}[1]{#1.}

\BibSpec{arxiv}{%
  +{}{\PrintAuthors} {author}
  +{,}{ \textit} {title}
  +{}{. } {journal}
  +{}{. } {note}
}

\BibSpec{article}{
  +{}{\PrintAuthors} {author}
  +{,}{ \textit} {title}
  +{.}{ } {journal}
  +{}{ \textbf} {volume}
  +{}{\parenthesize} {date}
  +{, }{no. } {number}
  +{, }{\perafter} {pages}
}

\BibSpec{book}{
  +{}{\PrintAuthors} {author}
  +{,}{ \textit} {title}
  +{. }{} {publisher}
  +{, }{} {address}
  +{, }{\perafter} {year}
}

\DefineJournal{advmath}{}{Adv. Math.}{Advances in Mathematics}
\DefineJournal{canadjmath}{}{Canad. J. Math.}{Canadian Journal of Mathematics. Journal Canadien de Math\'{e}matiques}
\DefineJournal{canadmathb}{}{Canad. Math. Bull.}{Canadian Mathematical Bulletin}
\DefineJournal{crmathacadsciparis}{}{C. R. Math. Acad. Sci. Paris}{Comptes Rendus Math\'{e}matique. Acad\'{e}mie des Sciences}
\DefineJournal{expomath}{}{Expo. Math.}{Expositiones Mathematicae}
\DefineJournal{jamermathsoc}{}{J. Amer. Math. Soc.}{Journal of the American Mathematical Society}
\DefineJournal{mathscand.}{}{Math. Scand.}{Mathematica Scandinavica}
\DefineJournal{procamermathsoc}{}{Proc. Amer. Math. Soc.}{Proceedings of the American Mathematical Society}
\DefineJournal{selectamathns}{}{Selecta Math. (N.S.)}{Selecta Mathematica. New Series}
\DefineJournal{jfunctanal}{}{J. Funct. Anal.}{Journal of Functional Analysis}

\DefineJournal{procamermathsoct}{}{Proc. Amer. Math. Soc. (to appear)}{Proceedings of the American Mathematical Society, to appear.}

\bib{ACQ}{article}{
      author = {Acquistapace, Francesca},
      author = {Broglia, Fabrizio},
      author = {Bronshtein, Michail},
      author = {Nicoara, Andreea},
      author = {Zobin, Nahum},
      title = {Failure of the Weierstrass prepatation theorem in quasi-analytic Denjoy-Carleman rings},
      date = {2014},
      journal = {advmath},
      volume = {258},
      pages = {397--413},
      }

\bib{analyticfinally}{book}{
      author = {Baouendi, M. Salah},
      author =  {Ebenfelt, Peter},
      author = {Rothschild, Linda Preiss},
      title = {Real Submanifolds in Complex Space and their Mappings},
      year = {1999},
      publisher = {Princenton University Press},
      address = {Princeton},
     }

\bib{BM}{article}{
       author = {Bierstone, Edward},
       author = {Milman, Pierre D.},
       title = {Resolution of singularities in Denjoy-Carleman classes},
       year = {2004},
       journal = {selectamathns},
       volume  = {10},
       pages   = {1--28},
       }

\bib{BMAnal}{article}{
      author = {Bierstone, Edward},
      author = {Milman, Pierre D.},
      author = {Parusi\'{n}ski, Adam},
      title = {A function which is arc-analytic but not continuous},
      year = {1991},
      journal = {procamermathsoc},
      volume = {113},
      pages = {419--423},
      }

\bib{BMarc}{article}{
      author = {Bierstone, Edward},
      author = {Milman, Pierre D.},
      author = {Valette, Guillaume},
      title = {Arc-quasianalytic functions},
      journal = {procamermathsoct},
      eprint = {arXiv:1401.7683v1 [math.CV]},
    }

\bib{Boman}{article}{
      author = {Boman, Jan},
      title = {Differentiability of a function of its compositions with functions of one variable},
      year = {1967},
      journal = {mathscand.},
      volume = {20},
      pages = {249--268},
      }

\bib{Borel}{article}{
      author = {\'{E}. Borel},
      title = {Sur la g\'{e}n\'{e}ralisation du prolongement analytique},
      year = {1900},
      journal = {crmathacadsciparis},
      volume = {130},
      pages = {1115-1118},
      }

\bib{ChauCho}{article}{
      author = {Chaumat, Jacques},
      author = {Chollet, Anne-Marie},
      title = {Division par un polyn\^{o}me hyperbolique},
      year = {2004},
      journal = {canadjmath},
      volume  = {56(6)},
      pages   = {1121-1144},
     }

\bib{Childress}{article}{
       author = {Childress, Charles L.},
       title = {Weierstrass division in quasianalytic local rings},
       year = {1976},
       journal = {canadjmath},
       volume  = {28(5)},
       pages   = {938-953},
      }

\bib{Hor}{book}{
      author = {H\"{o}rmander, Lars},
      title = {The Analysis of Linear Partial Differential Operators I},
      year = {1990},
      publisher = {Springer},
      address = {Berlin},
			}

\bib{RainerQuas}{article}{
			author = {Kriegl, Andreas},
			author = {Michor, Peter W.},
			author = {Rainer, Armin},
			title = {The convenient setting for quasianalytic Denjoy-Carleman differentiable mappings},
			year = {2011},
			journal = {jfunctanal},
			volume = {261(7)},
			pages = {1799-1834},
			}
			
	\bib{RainerNQuas}{article}{
			author = {Kriegl, Andreas},
			author = {Michor, Peter W.},
			author = {Rainer, Armin},
			title = {The convenient setting for non-quasianalytic Denjoy-Carleman differentiable mappings},
			year = {2009},
			journal = {jfunctanal},
			volume = {256(11)},
			pages = {3510-3544},
			}

\bib{ParRol}{article}{
			author = {Parusi\'{n}ski, Adam},
			author = {Rolin, Jean-Philippe},
			title = {A note on the Weierstrass preparation theorem in quasianalytic local rings},
			year = {2014},
			journal = {canadmathb},
			volume = {57},
			pages = {614-620},
			}

\bib{RSW}{article}{
      author = {Rolin, Jean-Philippe},
      author = {Speissegger, Patrick},
      author = {Wilkie, Alex J.},
      title = {Quasianalytic Denjoy-Carleman classes and o-minimality},
      year = {2003},
      journal = {jamermathsoc},
      volume = {16(4)},
      pages = {751-777},
      }

\bib{Thilliez}{article}{
      author = {Thilliez, Vincent},
      title = {On quasianalytic local rings},
      year = {2008},
      journal = {expomath},
      volume = {26},
      pages = {1--23},
      }

\end{biblist}
\end{bibdiv}
\vspace{1.2ex}
\footnotesize \textsc{University of Toronto, Department of Mathematics, 40 St. George Street,}\\
\textsc{Toronto, ON, Canada M5S 2E4}

\textit{E-mail address}: \texttt{\href{mailto:eyjaffe@mail.com}{eyjaffe@gmail.com}, \href{mailto:ethan.jaffe@mail.utoronto.ca}{ethan.jaffe@mail.utoronto.ca}}

\end{document}